   \def\MR#1{}
\newtheorem{thr}{Theorem}[section]
\newtheorem*{thr*}{Theorem}
\newtheorem*{ex*}{Example}
\newtheorem{df}[thr]{Definition}
\newtheorem*{df*}{Definition}
\newtheorem{pp}[thr]{Proposition}
\newtheorem*{pp*}{Proposition}
\newtheorem*{claim*}{Claim}
\newtheorem{rmk}[thr]{Remark}
\newtheorem*{lemma*}{Lemma}
\newtheorem{cor}[thr]{Corollary}
\newtheorem*{cor*}{Corollary}
\renewcommand\qedsymbol{$\blacksquare$}
\newcommand{\A}{\textbf{A}}
\newcommand{\F}{\textbf{F}}
\newcommand{\K}{\textbf{K}}
\renewcommand{\O}{\mathcal{O}}
\renewcommand{\P}{\textbf{P}}
\newcommand{\Q}{\textbf{Q}}
\newcommand{\D}{\mathfrak{D}}
\newcommand{\R}{\textbf{R}}
\newcommand{\X}{\mathfrak{X}}
\newcommand{\Xg}{X}
\newcommand{\Xs}{X_0}
\newcommand{\Y}{\mathfrak{Y}}
\newcommand{\Z}{\textbf{Z}}
\begin{document}

\setlength{\parindent}{1.5em}

\begin{center}
    \Large
    \textsc{Global Igusa Zeta Functions and {$K$}-Equivalence}
    
    \vspace*{0.4cm} 
    \large
    Shuang-Yen Lee
    
    \vspace*{0.9cm}
\end{center}

\pagestyle{plain}
\thispagestyle{plain}

\setcounter{section}{-1}

\begin{abstract}
    Let $\X$ and $\X'$ be two smooth projective varieties over the ring of integers of a $p$-adic field $\K$ with generic fibers being $X$ and $X'$. We introduce a (family of) good $s$-norms on the pluricanonical spaces of $X$ and $X'$, which are called global Igusa zeta functions in $s$, and show that if the $r$-canonical maps send $X$ and $X'$ birationally to their images respectively, then any isometry between $H^0(X, rK_X)$ and $H^0(X', rK_{X'})$ with respect to this $s$-norm (for some $s > 0$ and $s \neq 1/r$) induces $K$-equivalence on the $\K$-points between $X$ and $X'$. 
\end{abstract}

\section{Introduction}

In Royden's study of isometries on Teichm\"uller spaces, he proved that a compact Riemann surface $M$ of genus $g\ge 2$ is determined by the normed space $(H^0(M, 2K_M), \|{-}\|)$, where 
\[\|\alpha\| = \int_M |\alpha|. \]
Namely, the following Torelli type theorem holds. 

\begin{thr}[{\cite[Theorem 1]{MR0288254}}]
    Let $M$ and $M'$ be compact Riemann surfaces of genus $g\ge 2$, and let 
    \[T\colon \bigl(H^0(M', 2K_{M'}), \|{-}\|\bigr) \longrightarrow \bigl(H^0(M, 2K_{M}), \|{-}\|\bigr) \]
    be a complex linear isometry between the spaces of quadratic differentials. Then there is a conformal map $f\colon M\to M'$ and a complex constant $u$ with $|u| = 1$ such that $T = u\cdot f^*$. 
\end{thr}

We are interested in studying the $p$-adic analogue of Royden's theory. 
Let $\K$ be a fixed $p$-adic (local) field, i.e., a finite extension of $\Q_p$, and $\O$ its ring of integers. Let $\mathfrak{m}$ be the maximal ideal of $\O$, $\pi$ a uniformizer of $\O$, and $\F_{\!q} = \O/\mathfrak{m}$ the residue field of $\O$. For an $n$-dimensional projective variety $\X$ over $\O$, there are two parts of $\X$ --- the generic fiber $X = \X \times_{\operatorname{Spec} \O} \operatorname{Spec} \K$ and the special fiber $\Xs = \X \times_{\operatorname{Spec} \O} \operatorname{Spec} \F_{\!q}$. 

Now assume that $X$ is smooth over $\K$. We can 
view $X(\K)$ as a $\K$-analytic $n$-dimensional manifold (note that we used the smoothness of $X$ here) and thus $X(\K)$ is bianalytic to a finite disjoint union (of copies) of $\O^n$ (see \cite[Sec.~7.5]{MR1743467}). Let us define a quasinorm $\|{-}\|_\K$ on the $r$-pluricanonical space $H^0(X, rK_X)$ as follows. 

\begin{df}
    For any $\alpha \in H^0(X, rK_X)$, we can write it locally as $a(u)\,(du)^r$ on a chart $U\cong \O^n$ with coordinate $u = (u_1, \ldots, u_n)$. We define 
    \[\int_U |\alpha|^{1/r} = \int_{\O^n} |a(u)|^{1/r} d\mu_\O, \]
    where $|{-}|$ is the normalized norm on $\K$ so that $|u| = q^{-v(u)}$, and $\mu_\O$ is the normalized Haar measure on the locally compact abelian group ${\K}^n$ so that $\mu_\O(\O^n) = 1$. Also, we define the $p$-adic norm of $\alpha$ on $X$ by 
        \[\|\alpha\|_{1/r} = \int_{X(\K)} |\alpha|^{1/r} := \sum_U \int_U |\alpha|^{1/r}, \]
        which is independent of the choice of atlas $\{U\}$ by the change of variables formula (see \cite[Sec.~7.4]{MR1743467}). 
\end{df}

Under this analogous $p$-adic setting, in author's bachelor thesis \cite{Leebachelor}, Royden's method was generalized to curves over $\K$ via $p$-adic integrals, and indeed the estimates are easier than those in \cite{MR0288254}. It is natural for us to consider the higher dimensional case. Actually the higher dimensional case over $\textbf{C}$, conjectured by Yau, was generalized by Chi for $r$ being sufficiently large and sufficiently divisible \cite{MR3557304}. 
Indeed, $M$ can be determined up to a birational map by the (pseudo-)norm 
\[\|\alpha\| = \int_M |\alpha|^{2/r}\]
on $H^0(M, rK_M)$. Later on, the same statement for general $r$ was proven by Antonakoudis in \cite[5.2]{MR3251342}. Therefore, we shall do the $p$-adic work by adopting Antonakoudis' approach in the present paper.

Throughout this paper, we will use fraktur font to denote varieties over $\O$ (e.g.~$\mathfrak{X}$, $\mathfrak{X}'$, $\mathfrak{Y}$), normal font to denote their general fibers over $\K$ (e.g.~$X$, $X'$, $Y$), and normal font with subscript $0$ to denote their special fibers over $\textbf{F}_{\!q}$ (e.g.~$X_0$, $X_0'$, $Y_0$). 

From now on, let us assume that $X$ is smooth over $\K$. 
Strictly speaking, we should also require that $X(\K)$ is nonempty so that the integrals are nontrivial. In fact, under a more stringent assumption that $\X$ is smooth over $\O$, there exists a good reduction map
\[h_1\colon X(\K)\longrightarrow \Xs(\F_{\!q}), \]
and $X(\K) \neq \varnothing$ is equivalent to $X_0(\F_{\!q})\neq \varnothing$. 
We shall see that the latter condition can be achieved when $q$ is sufficiently large by using the Weil conjectures (cf. Remark~\ref{rmkWeilbound}). 




The first result in this paper can be stated as follows (cf. Theorem~\ref{thrbir}). 
\begin{thr}\label{thrbir0}
    Let $X$ and $X'$ be smooth projective varieties over $\K$ of dimension $n$, $r$ a positive integer. Suppose there is an isometry
    Then the images of the $\K$-points of the $r$-canonical maps 
    \[\Phi_{|rK_{X}|}\colon X\dashrightarrow \P\bigl(H^0(X, rK_{X})\bigr)^\vee\quad\text{and}\quad \Phi_{|rK_{X'}|}\colon X' \dashrightarrow \P\bigl(H^0(X', rK_{X'})\bigr)^\vee\]
    share a Zariski open dense subset under the identification 
    \[\P(T)^\vee\colon \P\bigl(H^0(X, rK_{X})\bigr)^\vee \xrightarrow{\ \sim\ }\P\bigl(H^0(X', rK_{X'})\bigr)^\vee. \]
\end{thr}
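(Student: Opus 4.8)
The plan is to recover, purely from the normed vector space $\bigl(H^0(X,rK_X),\|{-}\|_{1/r}\bigr)$, the direct image under the $r$-canonical map of the measure $|\beta|^{1/r}$ attached to a nonzero pluricanonical form $\beta$, and then to read the Zariski closure of the image of the $\textbf{K}$-points off the support of that measure. Throughout put $V=H^0(X,rK_X)$, $V'=H^0(X',rK_{X'})$, let $\varphi=\Phi_{|rK_X|}\colon X\dashrightarrow\P(V)^\vee$ and $\varphi'=\Phi_{|rK_{X'}|}\colon X'\dashrightarrow\P(V')^\vee$ be the $r$-canonical maps, and let $\psi=\P(T)^\vee\colon\P(V)^\vee\xrightarrow{\ \sim\ }\P(V')^\vee$; under the tautological identifications $H^0\bigl(\P(V)^\vee,\O(1)\bigr)=V$ and $H^0\bigl(\P(V')^\vee,\O(1)\bigr)=V'$ the pullback $\psi^\ast$ on sections of $\O(1)$ is exactly $T$. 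Since the base locus of $|rK_X|$ is a proper subvariety, hence $\textbf{K}$-analytically null in $X(\textbf{K})$, the pushforward $\nu_\beta\colonequals\varphi_\ast\bigl(|\beta|^{1/r}\bigr)$ is a well-defined finite positive measure on $\P(V)^\vee(\textbf{K})$ of total mass $\|\beta\|_{1/r}$ for every $\beta\in V\smallsetminus\{0\}$; define $\nu'_{\beta'}$ on $\P(V')^\vee(\textbf{K})$ analogously.

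First I would record the change-of-variables identity. For $\alpha,\beta\in V$ with $\beta\neq0$, the rational function $\alpha/\beta$ on $X$ is the $\varphi$-pullback of the ratio $g_{\alpha,\beta}\colonequals s_\alpha/s_\beta$ of the two sections of $\O(1)$ on $\P(V)^\vee$ attached to $\alpha$ and $\beta$, so $|\alpha|^{1/r}=|\alpha/\beta|^{1/r}\cdot|\beta|^{1/r}$ as measures on $X(\textbf{K})$, whence
\[
\|\alpha\|_{1/r}=\int_{X(\textbf{K})}|\alpha|^{1/r}=\int_{\P(V)^\vee(\textbf{K})}|g_{\alpha,\beta}|^{1/r}\,d\nu_\beta ,
\]
and likewise over $X'$. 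Now fix $\beta'\in V'\smallsetminus\{0\}$, set $\beta\colonequals T\beta'$, and feed $\alpha=T\alpha'$ into the identity: since $g_{T\alpha',T\beta'}=\psi^\ast g'_{\alpha',\beta'}$, changing variables by $\psi$ and using $\|T\alpha'\|_{1/r}=\|\alpha'\|_{1/r}$ gives
\[
\int_{\P(V')^\vee(\textbf{K})}|g'_{\alpha',\beta'}|^{1/r}\,d\bigl(\psi_\ast\nu_{T\beta'}\bigr)=\int_{\P(V')^\vee(\textbf{K})}|g'_{\alpha',\beta'}|^{1/r}\,d\nu'_{\beta'}\qquad\text{for every }\alpha'\in V'.
\]
In the affine chart $\{s_{\beta'}\neq0\}\cong\textbf{K}^{N}$ of $\P(V')^\vee$ the functions $g'_{\alpha',\beta'}$ are precisely the affine-linear functions, and both measures are concentrated on this chart, their masses on the hyperplane $\{s_{\beta'}=0\}$ being carried by the null sets $\{\beta'=0\}\subseteq X'(\textbf{K})$ and $\{T\beta'=0\}\subseteq X(\textbf{K})$. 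So the displayed equality says that two finite measures on $\textbf{K}^N$, each with convergent $1/r$-th moments, have the same integral against $|L|^{1/r}$ for every affine-linear $L$.

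The next step is a uniqueness lemma: a finite measure on $\textbf{K}^N$ with convergent $1/r$-th moments is determined by the values $\int|L|^{1/r}\,d\mu$ over affine-linear $L$. Reducing to $N=1$ by a $p$-adic Cramér--Wold argument (a finite measure on $\textbf{K}^N$ is determined by its images under all linear forms $\textbf{K}^N\to\textbf{K}$), one must recover a finite measure $\rho$ on $\textbf{K}$ from $c\mapsto\int_{\textbf{K}}|t-c|^{1/r}\,d\rho(t)$; this is the convolution $|{-}|^{1/r}\ast\check\rho$, and since the Fourier transform of the local zeta distribution $|{-}|^{1/r}$ is the function $\Gamma$-factor times $|\xi|^{-1-1/r}$ on $\textbf{K}\smallsetminus\{0\}$, hence nowhere vanishing there, the condition forces $\widehat{\rho}$ to vanish off the origin, and being a bounded continuous function it vanishes identically. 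Applying this to the two measures above yields $\psi_\ast\nu_{T\beta'}=\nu'_{\beta'}$. Finally, a nonzero pluricanonical form cannot vanish on any $\textbf{K}$-analytic open subset of $X(\textbf{K})$ (such a subset is an $n$-dimensional $\textbf{K}$-manifold, not contained in any proper subvariety), so $|\beta|^{1/r}$ and $|\beta'|^{1/r}$ have full supports; hence $\operatorname{supp}\nu_{T\beta'}$ is the $\textbf{K}$-analytic closure of the image of $X(\textbf{K})$ under $\varphi$ and $\operatorname{supp}\nu'_{\beta'}$ is the closure of the image of $X'(\textbf{K})$ under $\varphi'$, and since $\psi$ is an isomorphism of varieties carrying the one support onto the other, it carries the Zariski closure of $\varphi\bigl(X(\textbf{K})\bigr)$ onto that of $\varphi'\bigl(X'(\textbf{K})\bigr)$; away from the proper closed subsets where these images fail to fill out this common subvariety they agree, which is the assertion.

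The main obstacle is the uniqueness lemma together with its prerequisites: verifying that the geometrically natural pushforward measures really are finite, concentrated in an affine chart, and have convergent $1/r$-th moments, and that the Fourier inversion for the distribution $|{-}|^{1/r}$ on $\textbf{K}$ is legitimate. This step is the $p$-adic analogue of the rigidity of isometric embeddings of finite-dimensional spaces into $L^{p}$-spaces (here the exponent is $p=1/r$, which for $r\geq1$ never falls into the exceptional range), and the non-degeneracy of $\widehat{|{-}|^{1/r}}$ is exactly what makes it go through; the remaining bookkeeping about base loci, null sets, and the passage between analytic and Zariski closures is routine.
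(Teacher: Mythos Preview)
Your argument is essentially correct and reaches the same conclusion, but it takes a genuinely different route at the crucial step---the uniqueness lemma asserting that a finite measure on $\textbf{K}^N$ with finite $1/r$-th moments is determined by the integrals $\int|L|^{1/r}\,d\mu$ over affine-linear $L$.

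The paper proves this \emph{without} Fourier analysis. It integrates the identity $\int|1+\sum\lambda_i g_i|^{1/r}\,d\nu=\int|1+\sum\lambda_i g'_i|^{1/r}\,d\nu'$ against $d\lambda$ over $\O^N$, computes the resulting function $B(s,y)$ in closed form (it is $1$ on $\mathfrak{m}^N$ and $b(s)\max|y_i|^s$ outside), and then observes that the explicit ``cut-off'' combination $B_0(s,y)=q^sB(s,\pi y)-B(s,y)$ is a Schwartz--Bruhat function supported in $\O^N$ with nonzero integral. A finite sum of translates of $B_0$ reconstructs $\boldsymbol{1}_{\O^N}$, and translation/dilation invariance then gives all indicator functions, hence all Borel $h$. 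The paper's introduction explicitly flags this choice: Rudin's original proof over $\textbf{C}$ uses the Fourier transform, and the cut-off construction is adopted precisely to sidestep distribution-theoretic issues in the $p$-adic setting.

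Your approach instead runs Rudin's argument directly: Cram\'er--Wold reduces to $N=1$, and then you invoke Tate's local functional equation to identify $\widehat{|{-}|^{1/r}}$ with a nonzero gamma factor times $|\xi|^{-1-1/r}$. This is legitimate---for $s=1/r\in(0,1]$ the gamma factor has neither a zero nor a pole, and the homogeneous distribution $|x|^{1/r}$ has a well-defined tempered Fourier transform---but, as you note yourself, one must justify the passage from the pointwise identity $(|{-}|^{1/r}\ast\sigma)(c)=0$ (with $\sigma$ a finite signed measure of finite $1/r$-th moment but \emph{not} compactly supported) to the distributional product $\widehat{|{-}|^{1/r}}\cdot\hat\sigma=0$. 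In the $p$-adic setting this can be done concretely because Schwartz--Bruhat test functions are locally constant with compact support, but it is exactly the bookkeeping the paper's cut-off trick was designed to avoid.

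What each approach buys: yours is more conceptual, makes the analogy with the archimedean $L^p$-isometry rigidity transparent, and isolates the single arithmetic input (non-vanishing of the Tate gamma factor at $s=1/r$). The paper's is more elementary and entirely self-contained---no tempered distributions, no local functional equation, just an explicit finite linear combination of translated and dilated integrals. Both are valid proofs; the paper's is shorter to make fully rigorous.
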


The proof of the complex case given by Antonakoudis was based on Fourier transforms over $\textbf{C}$ used by Rudin in \cite[7.5.2]{MR2446682}. 
However, some properties of Fourier transforms 
used in the proof may not hold in
the \(p\)-adic setting. To achieve the \(p\)-adic statement, we shall use cut-off functions to bypass the difficulty and this in turn simplifies the argument. 




Recall that two birational \(\textbf{Q}\)-Gorenstein
varieties \(X_{1}\) and \(X_{2}\) are \(K\)-equivalent if there exists a common resolution 
\(\phi_{1}\colon Y\to X_{1}\) and \(\phi_{2}\colon Y\to X_{2}\) such that \(\phi_{1}^{\ast}K_{X_{1}} =\phi_{2}^{\ast}K_{X_{2}}\).
For smooth varieties, $K$-equivalence is the same as $c_1$-equivalence. \(K\)-equivalent varieties share many properties;
for example, V.~Batyrev \cite{MR1714818} and C.-L.~Wang \cite{MR1678489} showed that $K$-equivalent smooth varieties over $\textbf{C}$ have the same Betti numbers. From Theorem~\ref{thrbir0}, we obtain a birational map between \(X\) and \(X'\)
and thus it is of interest to know if the birational map indeed gives rise to $K$-equivalence. 

In \cite{MR1714818} and \cite{MR1678489}, the canonical measure $\mu_\X$ 
was intensively used and we
shall recall the construction here
for reader's convenience. By Hensel's lemma, each fiber $h_1^{-1}(\overline{x})$ of the reduction map $h_1$ is $\K$-bianalytic to $\pi \O^n$ (canonically up to a linear transformation, see \cite[Section~10.1]{MR1917232}). Using the canonical measure on $\pi\O^n$ (the normalized Haar measure $\mu_\O$ on the locally compact abelian group $\K^n$ in order that $\mu_\O(\O^n) = 1$), we get a measure on each $h_1^{-1}(\overline{x})$ that can be glued into a measure $\mu_{\X}$ on $X(\K)$. Note that this measure is independent of the choice of the bianalytic map $h_1^{-1}(\overline{x}) \xrightarrow{\sim} \pi \O^n$ and \(\mu_{\X}\) is 
called the canonical measure.

To further characterize \(K\)-equivalence, 
we will construct a new norm on $H^0(X,rK_{X})$ by mixing the canonical measure $d\mu_\X$ and the $p$-adic norm $|\alpha|^{1/r}$;
this is crucial in our approach. For a positive number $s$, the $s$-norm of $\alpha \in H^0(X, rK_X)$ is defined to be 
\[\|\alpha\|_s = \int_{X(\K)} |\alpha|^s\, d\mu_\X^{1 - rs}. \]
It is clear that $\|{-}\|_s$ defines a norm on the space $H^0(X, rK_{X})$. Since the Igusa zeta function associated to an analytic function $f\colon \O^n \to \K$ is defined by 
\[s \xmapsto{\quad} Z(s, f) = \int_{\O^n} |f|^s\,du, \] 
we shall call the assignment
\[\hphantom{\text{$s\in \textbf{C}$, $\operatorname{Re} s \ge 0$,}}s \xmapsto{\quad} \|\alpha\|_s = \int_{X(\K)} |\alpha|^s\, d\mu_{\X}^{1 - rs}, \hspace{10pt} s\in\textbf{C}~\mbox{and}~\operatorname{Re}s\ge 0,\]
the \textit{global Igusa zeta function} associated to $\alpha$. The ``global'' here is to emphasize that they are defined over projective varieties, while the original (local) Igusa zeta functions are defined over the affine space $\O^n$. As in the case of local Igusa zeta functions, global Igusa zeta functions are related to the Poincar\'e series and are rational in $t = q^{-s}$ (see \cite{MR546292}, \cite{MR626951}, \cite{MR751129}). 

We now assume that
\begin{itemize}
    \item[(\hypertarget{assumption}{$\spadesuit$})] \hspace{1cm}
    \centering $\Phi_{|V|}\colon X\dashrightarrow \P(V)^\vee$ and $\Phi_{|V'|}\colon X' \dashrightarrow \P(V')^\vee$ are birational to their images
\end{itemize}
and we are ready to state the second result (cf.~Corollary~\ref{corgI}). 

\begin{thr}
\label{theorem2nd}
    Let $\X$ and $\X'$ be smooth projective varieties over $\O$ of relative dimension $n$ and let $r$ be a positive integer. Suppose the general fibers $X$ and $X'$ satisfy (\hyperlink{assumption}{$\spadesuit$}) with $V = H^0(\Xg, rK_{\Xg})$ and $V' = H^0(\Xg', rK_{\Xg'})$ and there is an isometry
    \[T\colon (V', \|{-}\|_{s}) \longrightarrow (V, \|{-}\|_{s}) \]
    for some positive number $s\neq 1/r$. Then $\Xg$ and $\Xg'$ are $K$-equivalent on the $\K$-points, i.e., there is a smooth projective $Y$ over $\K$ with birational morphisms $\phi\colon Y \to \Xg$ and $\phi'\colon Y \to \Xg'$ such that $\phi^*K_X = {\phi'}^*K_{\Xg'}$ on $Y(\K)$. 
\end{thr}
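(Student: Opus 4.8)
The plan is to convert the isometry $T$ into a birational map $f\colon\Xg\dashrightarrow\Xg'$, transport the isometry identity to a common resolution by the change of variables formula, and then use the cut-off technique of Theorem~\ref{thrbir} to show that the resulting integral identity forces the two pulled-back canonical measures to be proportional on the $\textbf{K}$-points --- which is exactly $K$-equivalence on the $\textbf{K}$-points. Concretely, apply Theorem~\ref{thrbir} to $T\colon(V',\|{-}\|_s)\to(V,\|{-}\|_s)$; since (\hyperlink{assumption}{$\spadesuit$}) makes the $r$-canonical maps $\Phi_{|V|}=\Phi_{|rK_{\Xg}|}$ and $\Phi_{|V'|}=\Phi_{|rK_{\Xg'}|}$ birational onto their images, the map $f\colonequals\Phi_{|V'|}^{-1}\circ\P(T)^\vee\circ\Phi_{|V|}\colon\Xg\dashrightarrow\Xg'$ is birational, and comparison with the tautological line bundles gives $f^*=\lambda\cdot T$ on $V'$ for some $\lambda\in\textbf{K}^\times$. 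Fix a smooth $\Yg$ over $\textbf{K}$ with birational morphisms $\phi\colon\Yg\to\Xg$ and $\phi'\colon\Yg\to\Xg'$ satisfying $\phi'=f\circ\phi$ (resolve the closure of the graph of $f$; $\textbf{K}$ has characteristic $0$). Note that $\Yg(\textbf{K})$ has positive measure and is Zariski dense in $\Yg$, because (\hyperlink{assumption}{$\spadesuit$}) gives this for $\Xg(\textbf{K})$ and $\phi$ is an isomorphism off a set of measure zero.

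For $\alpha'\in V'$ put $\beta\colonequals\phi'^*\alpha'=\phi^*(f^*\alpha')\in H^0(\Yg,rK_{\Yg})$, which is legitimate since the exceptional loci of $\phi,\phi'$ have codimension $\ge 2$. Because $\phi$ and $\phi'$ are birational with exceptional loci of measure zero, applying the change of variables formula to the honest measures $|{-}|^s\,d\mu^{1-rs}$ gives
\[\|T\alpha'\|_s=|\lambda|^{-s}\!\!\int_{\Yg(\textbf{K})}\!\!|\beta|^s\,(\phi^*\mu_{\Xg})^{1-rs},\qquad\|\alpha'\|_s=\int_{\Yg(\textbf{K})}\!\!|\beta|^s\,(\phi'^*\mu_{\Xg'})^{1-rs}.\]
Writing $J\colonequals d(\phi'^*\mu_{\Xg'})/d(\phi^*\mu_{\Xg})$ for the Radon--Nikodym derivative on $\Yg(\textbf{K})$, the hypothesis $\|\alpha'\|_s=\|T\alpha'\|_s$ is equivalent to
\[\int_{\Yg(\textbf{K})}|\beta|^s\bigl(J^{1-rs}-|\lambda|^{-s}\bigr)(\phi^*\mu_{\Xg})^{1-rs}=0\qquad\text{for all }\beta\in W\colonequals\phi^*V.\]
Since $s\neq 1/r$ the exponent $1-rs$ is nonzero, so this genuinely constrains $J$ (for $s=1/r$ the factor $J^{1-rs}$ is identically $1$ and nothing about the canonical measures can be read off --- this is precisely where the hypothesis $s\neq 1/r$ enters). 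On the other hand, from the local description of the canonical measures by generators of the relative canonical bundles of $\X$ and $\X'$, the function $J$ is the absolute value of a (locally defined) rational function whose local divisor is $(K_{\Yg}-\phi'^*K_{\Xg'})-(K_{\Yg}-\phi^*K_{\Xg})$; as $\Yg(\textbf{K})$ is Zariski dense, $J$ is constant on $\Yg(\textbf{K})$ exactly when $\phi^*K_{\Xg}=\phi'^*K_{\Xg'}$ on $\Yg(\textbf{K})$ --- the conclusion we want (the proportionality constant $|\lambda|^{-s/(1-rs)}$ is irrelevant to this statement about the divisor classes).

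It therefore remains to deduce from the displayed identity that $g\colonequals J^{1-rs}-|\lambda|^{-s}$ vanishes almost everywhere on $\Yg(\textbf{K})$; Zariski density then upgrades $g=0$ a.e.\ to $J$ being genuinely constant, and we are done. This is the heart of the argument and the step I expect to be the main obstacle. I would run the ``cut-off'' argument used to prove Theorem~\ref{thrbir}, the $p$-adic substitute for the Fourier-transform step of Rudin used in the complex-analytic proofs of Royden, Chi and Antonakoudis. Fixing $\beta_0\in W$, the functions $|\beta/\beta_0|^s$ with $\beta\in W$ are $s$-th powers of absolute values of rational functions that, by (\hyperlink{assumption}{$\spadesuit$}), generate the function field of $\Xg$ (they define the birational $r$-canonical embedding); restricting to a small polydisc $\Delta\cong\pi^k\O^n$ on which $n$ suitable such ratios become coordinates reduces the identity to a local Igusa-type integral $\int_\Delta|c_0+c_1u_1+\cdots+c_nu_n|^s\,g\,du=0$ for all $c_0,\dots,c_n\in\textbf{K}$, and the cut-off functions --- which over $\textbf{K}$ play the role that differentiating in the parameters $c_i$ plays in the archimedean case --- let one separate the level sets of $|c_0+\sum c_iu_i|$ finely enough to force $g\equiv 0$ on $\Delta$. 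The two delicate points are: (i) the bookkeeping that passes from the global identity on $\Yg(\textbf{K})$ to the local ones on polydiscs without losing information, in particular arranging the polydiscs compatibly with the $r$-canonical embedding so that enough of the ratios $\beta/\beta_0$ restrict to honest coordinates; and (ii) making the cut-off/density step work over the $p$-adic field, where one cannot differentiate through the $c_i$ and must instead exploit the discreteness of $|\,\cdot\,|$ together with the point separation provided by (\hyperlink{assumption}{$\spadesuit$}).
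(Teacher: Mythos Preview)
Your strategy is correct and runs parallel to the paper's: obtain the birational map $f$ and the relation $T=u\cdot f^*$ from Theorem~\ref{thrbir}/Corollary~\ref{corbir}, pass to a common smooth resolution $\Yg$, and use the cut-off machinery to compare the pulled-back canonical measures. The paper's route differs in one organisational point: rather than working directly with the ratio $J=d(\phi'^*\mu_{\X'})/d(\phi^*\mu_{\X})$, it introduces an auxiliary measure $\mu_Y$ on $\Yg(\textbf{K})$ (the construction~(\hyperlink{construction}{$\dagger$})) and proves the stronger Theorem~\ref{thrgI}, namely that each Jacobian $J_\phi=\phi^*d\mu_{\X}/d\mu_Y$ is \emph{determined} by the pair of normed spaces $(V_{r,X},\|{-}\|_s)$ and $(V_{r,Y},\|{-}\|_s)$. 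Applying this twice yields $J_{\phi'}=|u|^{1/(1-rs)}J_\phi$. Your direct argument with the signed measure $g\,d\nu$ is more economical for the corollary at hand; the paper's detour buys the extra information that the discrepancy data $\{(E,m_E)\}$ are themselves recoverable from the norms.

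One genuine correction to your sketch: the step ``restricting to a small polydisc $\Delta$ reduces the identity to $\int_\Delta|c_0+\sum c_iu_i|^s g\,du=0$'' is backwards and, as written, does not work --- the global identity $\int_{\Yg(\textbf{K})}|\beta|^s g\,(\phi^*\mu_{\X})^{1-rs}=0$ cannot be restricted to a subdomain. The cut-off argument must be run \emph{globally}: writing $d\nu=|\beta_0|^s(\phi^*\mu_{\X})^{1-rs}$ and noting that $g\,d\nu=|\beta_0|^s(\phi'^*\mu_{\X'})^{1-rs}-|\lambda|^{-s}d\nu$ is a finite signed measure, the Claim inside the proof of Theorem~\ref{thrbir} (applied verbatim to this signed measure) gives $\int_{\Yg^\circ(\textbf{K})}(h\circ G)\,g\,d\nu=0$ for every Borel $h$. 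Only \emph{after} this do you localise, by taking $h=\boldsymbol{1}_{B_\varepsilon(G(y))}$ and using that $G$ is generically injective (which is exactly what (\hyperlink{assumption}{$\spadesuit$}) supplies) to conclude $g(y)=0$ on a Zariski-dense open set of $\Yg(\textbf{K})$. This is precisely how the paper handles it in Theorem~\ref{thrgI}, and it dissolves your worry~(i): the cut-off functions perform the localisation for you, so no separate bookkeeping for polydiscs is needed.
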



In \cite{MR1949787}, Kawamata proved that $D$-equivalence (i.e., equivalence between derived categories of coherent sheaves) implies $K$-equivalence for general type smooth projective varieties over $\textbf{C}$. As a $p$-adic normed space analogue, the second result of this paper is to show that isometry on the $p$-adic normed spaces implies $K$-equivalence on the $\K$-points when $X$ and $X'$ satisfy (\hyperlink{assumption}{$\spadesuit$}).

The main idea of the proof is to determine the Jacobian function by the norms. More precisely, for a resolution $\phi\colon Y\to X$, we use the norms on $H^0(X, rK_X)$ and $H^0(Y, rK_{Y})$ to determine $J_\phi = \phi^*d\mu_{\X} / d\mu_Y$, based on the proof of Theorem~\ref{thrbir0}. 

The same proof applies to the case where $\X$ is replaced by a klt pair $(\X, \D)$ so that the canonical measure $d\mu_{\X, \D}$ (defined in Section~\ref{secgI}) is finite and the space $H^0(X, rK_X)$ is replaced by a linear subspace $V$ of $H^0(X, \lfloor r(K_X + D + L)\rfloor)$ for some $\R$-divisor $L$ in order that the map $\Phi_{|V|}$ maps $X$ birationally into its image. 

I have to remark that Chi posted a similar result to Theorem~\ref{thrbir0} in \cite{arxiv.2211.09335} later on. In fact, he was in my bachelor thesis oral examination committee although he resigned after reviewing my manuscript. The paper is organized as follows. In Section~\ref{secgI}, we will construct the global Igusa zeta function using the \(p\)-adic norm and the canonical measure. In Section~\ref{secbir}, we will give a proof of Theorem \ref{thrbir0}. In Section~\ref{secKequiv}, we will prove Theorem \ref{theorem2nd}, our 
result on \(K\)-equivalence. 

\section*{Acknowledgement}

I would like to thank Professor C.-L.~Wang for introducing this subject to me and suggesting me mix the original $p$-adic norm and the canonical measure. I would also like to thank T.-J.~Lee, H.-W.~Lin, P.-H.~Chuang, S.-S.~Wang, H.-Y.~Yao, and J.-D.~Yu for useful discussions or comments. 

\section{Global Igusa zeta functions}\label{secgI}

As in the introduction, for a $p$-adic (local) field $(\K, |{-}|)$, let 
\begin{itemize}
    \item $\O = \{x\in \K\mid |x| \le 1\}$ be the ring of integers, 
    \item $\mathfrak{m} = \{x\in \K\mid |x| < 1\} = (\pi)$ the maximal ideal of $\O$, 
    \item $v\colon\K \to \Z\cup\{\infty\}$ the valuation on $\K$, and
    \item $\F_{\!q} = \faktor{\O}{\mathfrak{m}}$ the residue field. 
\end{itemize}
Here we scale the norm in order that $|u| = q^{-v(u)}$ for all $u\in \K^\times$. 

Fix a positive integer $r$. Let $\X$ be an $n$-dimensional projective variety over $\O$ with the following assumptions: its general fiber $X$ is smooth over $\K$, and $X(\K)$ (as a $\K$-analytic manifold) is nonempty. It follows from the valuative criterion for properness that $\X(\O) = \X(\K) = X(\K)$. 

We have defined the norm $\|{-}\|_{1/r}$ on the space of pluricanonical $r$-forms $H^0(\Xg, rK_{\Xg})$ via the $p$-adic integral: 
\[\|\alpha\|_{1/r} = \int_{\Xg(\K)} |\alpha|^{1/r}. \]
The assumption that $\Xg(\K)$ being nonempty is to make sure that the norm is not trivial (by implicit function theorem). Another reason to make this assumption is that we want $\Xg(\K)$ to be Zariski dense in $\Xg$. Indeed, $\Xg(\K)$ contains a $\K$-analytic open subset that is bianalytic to an $n$-dimensional polydisc $\Delta$ with the result that the dimension of the Zariski closure of $\Xg(\K)$ is at least $n$. 

In order to resolve the non-general type case, we shall extend the definition of the norm to a Kawamata log-terminal (klt for short) pair $(X, D)$: 
\[\|\alpha\|_{1/r, D} \colonequals \int_{\Xg(\K)} |\alpha|^{1/r},\quad \alpha \in H^0(\Xg, \lfloor r(K_X + D)\rfloor ). \]

When $\X$ is smooth over $\O$, we have defined for each positive number $s$ the $s$-norm 
\[\|\alpha\|_s = \int_{\Xg(\K)} |\alpha|^s \,d\mu_{\X}^{1-rs}, \]
where $d\mu_{\X}$ is the canonical measure on $\Xg(\K)$ defined by the mod-$\mathfrak{m}$ reduction 
\[h_1\colon \X(\O) \longrightarrow \Xs(\F_{\!q}), \]
where $\Xs$ is the special fiber of $\X$ over $\F_{\!q}$. 

For a klt log pair $(\X, \D)$, we can consider the measure $d\mu_{\X, \D}$, which is locally the $p$-adic norm $|\omega|^{1/r_\D}$ of a generator $\omega\in H^0(\mathfrak{U}, r_\D(K_{\X} + \D))(\O)$, where $r_\D$ is a positive integer such that $r_\D(K_{\X} + \D)$ is Cartier and $\mathfrak{U}\subseteq \X$ is an open subset on which $r_\D(K_{\X} + \D)$ is free. Note that this $p$-adic norm does not depend on the choice of $\omega$, $r_\D$ and $\mathfrak{U}$, and hence glues into $d\mu_{\X, \D}$ on $X(\K)$. 

For any $\R$-divisor $L$ and any positive integer $r$, we can define $\|{-}\|_{s, \D}$ similarly on the space $H^0(\Xg, \left\lfloor r(K_X + D + L)\right\rfloor)$ by
\[\|\alpha\|_{s, \D} = \int_{\Xg(\K)} |\alpha|^s\, d\mu_{\X, \D}^{1 - rs}\]
when $\operatorname{Re} s \ge 0$ is small enough. 

Indeed, replacing $L$ by $L'$, which is determined by the equation
\[r(K_X + D + L') = \lfloor r(K_X + D + L) \rfloor, \]
we may assume that $r(K_X + D + L)$ is Cartier. Let $F$ be the fixed locus divisor of the linear system $|r(K_X + D + L)|$ and consider a log-resolution $\phi\colon Y \to \Xg$ such that 
\[\phi^*D = \sum_{E\in \mathcal{E}} d_E E,\quad \phi^*L = \sum_{E\in \mathcal{E}} \ell_E E,\quad \phi^*F = \sum_{E\in\mathcal{E}} f_E E,\quad K_{Y} = \phi^*K_X + \sum_{E\in \mathcal{E}} m_E E \]
with $\sum E$ a smooth normal crossing divisor. Then $\|{-}\|_{s, \D}$ is defined on $H^0(\Xg, r(K_X + D + L))$ when 
\[\operatorname{Re}\bigl(s \cdot (f_E + r(m_E - d_E - \ell_E)) + (1 - rs) \cdot (m_E - d_E)\bigr) > -1, \quad \forall E\in \mathcal{E}, \]
which is equivalent to 
\[\operatorname{Re} s < s_r(\X, \D, L) \colonequals \inf_{E \in \mathcal{E}^+} \left(\frac{m_E - d_E + 1}{r\ell_E - f_E}\right), \] 
where $\mathcal{E}^+ = \{E\in \mathcal{E}\mid r\ell_E > f_E\}$. Note that the klt assumption on the log pair $(\X, \D)$ is used here in order that $s_r(\X, \D, L) > 0$. 

Since $s_r(\X, \D, L)$ is defined to be the largest number such that the integral of $|\alpha|^s\,d\mu_{\X, \D}^{1-rs}$ is finite for all $0 \le \operatorname{Re} s < s_r(\X, \D, L)$ and $\alpha \in H^0(\Xg, r(K_X + D + L))$, it is independent of the choice of the resolution $\phi$. 

For general $L$ and a linear subspace $V$ of $H^0(\Xg, \left\lfloor r(K_X + D + L)\right\rfloor)$, $s_r(\X, \D, L)$ is defined to be $s_r(\X, \D, L')$, where $L'$ is determined by the equation
\[r(K_X + D + L') = \lfloor r(K_X + D + L) \rfloor; \]
$s_r(\X, \D, V) \ge s_r(\X, \D, L)$ is defined to be the largest number such that the integral of $|\alpha|^s\,d\mu_{\X, \D}^{1-rs}$ is finite for all $0 \le \operatorname{Re} s < s_r(\X, \D, V)$ and $\alpha \in V$. 

In this (smooth) case, the assumption on $\Xg(\K)$ being nonempty is equivalent to $\Xs(\F_{\!q})$ being nonempty. 
\begin{rmk}\label{rmkWeilbound}
    Using the Weil conjectures \cite{MR340258}, we see that  
    \[\#\Xs(\F_{\!q}) = \sum_{i = 0}^{2n} \sum_{j = 1}^{h_i} (-1)^i \alpha_{ij} \ge q^{n} + 1 - \sum_{i = 1}^{2n-1} h^i q^{i/2}, \]
    for some algebraic integers $\alpha_{ij}$ with $|\alpha_{ij}| = q^{i/2}$, where $h^i = h^i(\Xg)$ are the Betti numbers. Hence, the condition $\# \Xs(\F_{\!q}) > 0$ could be achieved by some bounds on $q$ and $h^i$'s. For example, 
    \[q \ge \left(\sum_{i=1}^{2n - 1}h^i\right)^2 \]
    will do. We denote by $q_0(\X)$ to be the smallest positive integer such that 
    \[q^n + 1 \ge \sum_{i = 1}^{2n-1} h^i q^{i/2},\quad \forall q \ge q_0(\X). \]
    In particular, when $\X$ is a curve (over $\O$) of genus $g \ge 1$, $q_0(\X) = 4g^2 - 2$. 
\end{rmk}

For each $k\ge 1$, consider the mod-$\mathfrak{m}^k$ reductions 
\[\begin{tikzcd}[column sep = 0pt, row sep = 0pt]
    \X(\O) \ar[rr, "h_k"] &~~& \X(\O/\mathfrak{m}^k)\\
    x \ar[rr, mapsto] &~~& \overline{x}^{(k)}
\end{tikzcd}\quad\text{and}\quad\begin{tikzcd}[column sep = 0pt, row sep = 0pt]
    H^0(\X, rK_{\X})(\O) \ar[rr] &~~& H^0(\X, rK_{\X})(\O/\mathfrak{m}^k)\\
    \alpha \ar[rr, mapsto] &~~& \overline{\alpha}^{(k)}, 
\end{tikzcd}\]
where $H^0(\X, rK_{\X})(R)$ denotes the $R$-points of $H^0(\X, rK_{\X})$ for $R = \O$, $\O/\mathfrak{m}^k$. Applying the method in \cite{MR441933}, we may calculate the norm by the numbers of the zeros of $\alpha \in H^0(\X, rK_{\X})(\O)$ on each $\X(\O/\mathfrak{m}^k)$: 

\begin{pp}\label{ppcal}
    For a nonzero element $\alpha \in H^0(\X, rK_{\X})(\O)$, we have 
    \[\|\alpha\|_s = \frac{\# \Xs(\F_{\!q})}{q^n} - (q^s - 1) \sum_{k=1}^\infty \frac{N_k}{q^{k(n + s)}}, \]
    where 
    \[N_k = \#\left\{\overline{x}^{(k)}\in \X(\O/\mathfrak{m}^k)\,\middle|\, \overline{\alpha}^{(k)}\bigl(\overline{x}^{(k)}\bigr) = 0\right\} \]
    is the cardinality of the zero set of $\overline{\alpha}^{(k)}\in H^0(\X, rK_{\X})(\O/\mathfrak{m}^k)$ on $\X(\O/\mathfrak{m}^k)$. 
\end{pp}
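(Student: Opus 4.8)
The plan is to express the integral $\|\alpha\|_s$ as a weighted count of the level sets of the order of vanishing of $\alpha$, identify those level sets with preimages under the reduction maps $h_k$, compute their measures from the smoothness of $\X$ over $\O$, and then sum a geometric-type series; this follows the pattern of \cite[Section~8.2]{MR1743467}.

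First I would set up the local picture. Since $\X$ is smooth over $\O$, the $\O$-module $rK_{\X}$ is locally free, so near any $x\in\X(\O)$ we may choose a local $\O$-generator $\eta$ of $rK_{\X}$, write $\alpha=a\,\eta$ with $a$ a local regular function, and set $v(\alpha(x)):=v(a(x))\in\Z_{\ge 0}$; this is independent of $\eta$ because two generators differ by a unit, and $v(\alpha(x))\ge 0$ since $\alpha$ is integral. On a chart arising from the bianalytic identification $h_1^{-1}(\overline{x})\cong\pi\O^n$ — which is precisely the chart on which $d\mu_{\X}$ is the restricted Haar measure $|du|$, so $\eta$ may be taken to be $(du)^{\otimes r}$ — one has $|\alpha|^{s}\,d\mu_{\X}^{1-rs}=|a(u)|^{s}\,|du|=q^{-s\,v(\alpha(x))}\,d\mu_{\X}$. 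Each level set $\{x:v(\alpha(x))=k\}$ is open and compact (see below), hence measurable, and they partition $\X(\O)$ up to measure zero, so
\[
\|\alpha\|_s=\int_{\X(\O)}q^{-s\,v(\alpha(x))}\,d\mu_{\X}=\sum_{k\ge 0}q^{-sk}\,\mu_{\X}\bigl(\{x:v(\alpha(x))=k\}\bigr).
\]

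Next I would identify $A_k:=\{x\in\X(\O):v(\alpha(x))\ge k\}$. Since evaluation commutes with reduction mod $\mathfrak{m}^k$ and $\overline{\eta}^{(k)}$ remains a generator of $rK_{\X}$ over $\O/\mathfrak{m}^k$, one gets $v(\alpha(x))\ge k$ if and only if $\overline{\alpha}^{(k)}(\overline{x}^{(k)})=0$; hence $A_k=h_k^{-1}(Z_k)$, where $Z_k\subseteq\X(\O/\mathfrak{m}^k)$ is the finite zero set of $\overline{\alpha}^{(k)}$, so $\#Z_k=N_k$. By formal smoothness of $\X$ over $\O$, the map $h_k$ is surjective and each of its fibers is, in the coordinates above, a coset of $(\mathfrak{m}^k)^n\cong\pi^k\O^n$, of canonical measure $q^{-kn}$ — this is exactly the normalization defining $d\mu_{\X}$. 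Therefore $\mu_{\X}(A_k)=N_k/q^{kn}$ for $k\ge 1$, while $\mu_{\X}(A_0)=\mu_{\X}(\X(\O))=\#\Xs(\F_{\!q})/q^n$.

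Finally, writing $\mu_{\X}(\{v=k\})=\mu_{\X}(A_k)-\mu_{\X}(A_{k+1})$ and substituting, an Abel-summation rearrangement — legitimate for $\operatorname{Re}s>0$ because the $\mu_{\X}(A_k)$ are bounded by $\mu_{\X}(\X(\O))<\infty$ while $|q^{-sk}|$ decays geometrically, so $q^{-sK}\mu_{\X}(A_{K+1})\to 0$ — collapses the series to
\[
\|\alpha\|_s=\mu_{\X}(A_0)-(q^s-1)\sum_{k\ge 1}\frac{\mu_{\X}(A_k)}{q^{sk}}=\frac{\#\Xs(\F_{\!q})}{q^n}-(q^s-1)\sum_{k\ge 1}\frac{N_k}{q^{k(n+s)}},
\]
which is the claimed formula. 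I expect the only genuinely delicate points to be the two uses of smoothness of $\X$ over $\O$: the global well-definedness of $v(\alpha(x))$ (local freeness of $rK_{\X}$) and the identification of the $h_k$-fibers with cosets of $\pi^k\O^n$ carrying $d\mu_{\X}$-mass $q^{-kn}$ (matching the formal-lifting description of the fibers against the Haar-measure normalization of the canonical measure); granted these, the remainder is the bookkeeping in local charts already done in \cite[Section~8.2]{MR1743467}.
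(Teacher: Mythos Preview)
Your proposal is correct and follows essentially the same approach as the paper: both decompose the integral over the level sets of $|a|$, identify $\{v(\alpha(x))\ge k\}$ with the preimage of the zero set of $\overline{\alpha}^{(k)}$, compute its measure as $N_k/q^{kn}$ via smoothness of $\X$ over $\O$, and then apply the same Abel summation. The only cosmetic difference is that the paper works fiber-by-fiber over $h_1$ (computing $\int_{\pi\O^n}|a(u)|^s\,du$ on each chart and then summing over $\Xs(\F_{\!q})$), whereas you phrase everything globally through the maps $h_k$; the substance is the same.
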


\begin{proof}
    Since $\X$ is smooth over $\O$, each fiber of $h_1$ is $\K$-bianalytic to $\pi \O^n$ with measure preserved. For $\overline{x}^{(1)}\in \Xs(\F_{\!q})$, let $u = (u_1, \ldots, u_n)$ be a local coordinate of $h_1^{-1}(\overline{x}^{(1)})$. Then 
    \[\int_{h_1^{-1}(\overline{x}^{(1)})} |\alpha|^s = \int_{\pi \O^n} |a(u)|^s\, du\]
    for some analytic function $a(u) \in \O[[u]]$. Let 
    \[N_k\bigl(\overline{x}^{(1)}\bigr) = \#\left\{\overline{y}^{(k)} \in h_k(h_1^{-1}(\overline{x}^{(1)}))\,\left|\, \overline{\alpha}^{(k)}\bigl(\overline{y}^{(k)}\bigr) =0\right.\right\} \]
    be the cardinality of the zero set of $\overline{\alpha}^{(k)}$ on the fiber of $\overline{x}^{(1)}$. Then it follows from 
    \[\mu_\O\left(|a|^{-1}\left([0, q^{-k}]\right)\right) = \mu_\O\left(\left\{u\bigm| \overline{a}^{(k)}\bigl(\overline{u}^{(k)}\bigr) = 0\right\}\right) = \frac{N_k\bigl(\overline{x}^{(1)}\bigr)}{q^{kn}}\]
    that
    \begin{align*}
        \int_{\pi \O^n} |a(u)|^s\,du &= \sum_{k = 0}^\infty q^{-ks}\cdot \mu_\O(|a|^{-1}(q^{-k})) = \frac{1}{q^n} - (q^s - 1)\sum_{k = 1}^\infty q^{-ks}\cdot \frac{N_k\bigl(\overline{x}^{(1)}\bigr)}{q^{kn}}. 
    \end{align*}
    Summing the above equation over $\overline{x}^{(1)} \in \Xs(\F_{\!q})$, we get 
    \begin{align*}
        \|\alpha\|_s &= \sum_{\ \ \overline{x}^{(1)}}\left(\frac{1}{q^n} - (q^s - 1)\sum_{k = 1}^\infty \frac{N_k\bigl(\overline{x}^{(1)}\bigr)}{q^{k(n+s)}}\right) = \frac{\# \Xs(\F_{\!q})}{q^n} - (q^s - 1) \sum_{k=1}^\infty \frac{N_k}{q^{k(n + s)}}. \qedhere
    \end{align*}
\end{proof}

In particular, we have \cite[Theorem~2.2.5]{MR670072}
\[\|\alpha\|_0 = \int_{\Xg(\K)} \, d\mu_{\X} = \frac{\# \Xs(\F_{\!q})}{q^n}, \quad \|\alpha\|_\infty \colonequals \lim_{s \to \infty} \|\alpha\|_s = \frac{\# \Xs(\F_{\!q}) - N_1}{q^n}. \]
We see that 
\[\|\alpha\|_s = \frac{\# \Xs(\F_{\!q})}{q^n} - (1 - t) \sum_{k = 1}^\infty \frac{N_{k}}{q^{kn}} t^{k - 1} \]
is a holomorphic function in $t = q^{-s}$ near $t = 0$. As (local) Igusa zeta functions are rational functions in $t$ \cite{MR546292}, \cite{MR626951}, the similar result also holds for global Igusa zeta functions: 

\begin{pp} 
    The holomorphic function $t \mapsto \|\alpha\|_{s}$ is a rational function in $t = q^{-s}$ of the form 
    \[\frac{P_\alpha(t)}{\prod_E (q^{m_E+1} - t^{a_E})}, \]
    where $P_\alpha$ is a polynomial with coefficients in $\Z[q^{-1}]$ and $\{(a_E, m_E)\}_{E\in \mathcal{E}}$ are the 
    discrepancies associated to the divisor $(\alpha)$. 
\end{pp}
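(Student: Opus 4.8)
The plan is to write $\|\alpha\|_s$ as a \emph{finite} sum of local Igusa integrals, apply an embedded resolution of singularities, and read off the denominator from the resulting monomial integrals; this is the global analogue of the rationality of local Igusa zeta functions. First, exactly as in the proof of Proposition~\ref{ppcal}, I would decompose
\[\|\alpha\|_s \;=\; \sum_{\bar x\in\Xs(\F_{\!q})}\ \int_{h_1^{-1}(\bar x)} |\alpha|^s\,d\mu_{\X}^{1-rs},\]
a finite sum since $\Xs(\F_{\!q})$ is finite; each $h_1^{-1}(\bar x)$ is $\textbf{K}$-bianalytic to $\pi\O^n$ with $d\mu_{\X}$ carried to the standard measure $du$, so the $\bar x$-term equals $\int_{\pi\O^n}|a_{\bar x}(u)|^s\,du$, where $a_{\bar x}\in\O[[u]]$ is the local expression of $\alpha$ as a section of $rK_{\X}$ on that chart. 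It therefore suffices to evaluate these local integrals and then combine their denominators.

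Next, rather than resolving the germs $a_{\bar x}$ one at a time, I would fix a single log-resolution $\phi\colon\Y\to\X$ of the pair $(\X,(\alpha))$, so that $\operatorname{Exc}(\phi)$ together with the strict transform of $(\alpha)$ is a simple normal crossing divisor $\sum_{E\in\mathcal E}E$; set $a_E=\operatorname{ord}_E\phi^*(\alpha)$ and $K_{\Y}=\phi^*K_{\X}+\sum_{E}m_E E$, so that $(a_E,m_E)$ is precisely the numerical datum of $E$ over $((\alpha),\X)$. Since $\phi$ is an isomorphism over the complement of a closed subset of codimension $\ge2$ in $\X$, it is a measure-preserving bijection off a set of measure zero, so by the change of variables formula
\[\|\alpha\|_s \;=\; \int_{Y(\textbf{K})} \phi^*\!\bigl(|\alpha|^s\,d\mu_{\X}^{1-rs}\bigr),\]
and in coordinates $y$ adapted to $\sum E$ near a $\textbf{K}$-point (so that the components through it are $E_i=\{y_i=0\}$) the pulled-back density is $|a\!\circ\!\phi(y)|^s\,|J_\phi(y)|\,|dy_1\wedge\cdots\wedge dy_n|$ with $|a\!\circ\!\phi(y)|=(\text{unit})\cdot\prod_i|y_i|^{a_{E_i}}$ and $|J_\phi(y)|=(\text{unit})\cdot\prod_i|y_i|^{m_{E_i}}$. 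The exponent on $|y_i|$ is thus $a_{E_i}s+m_{E_i}$: although $\phi^*\alpha$, as a section of $rK_{\Y}$, vanishes along $E$ to order $a_E+rm_E$, pairing $|\phi^*\alpha|^s$ with $(\phi^*d\mu_{\X})^{1-rs}$ removes the $rm_E s$ contribution.

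A $p$-adic cell decomposition adapted to $\sum E$ (equivalently, stratifying the good-reduction polydiscs of $\Y$ by which components meet each point) breaks the integral into finitely many pieces, each factoring into one-variable integrals $\int_{\mathfrak{m}^k}|y|^{a_Es+m_E}\,dy$; with $t=q^{-s}$ this equals $c\cdot t^{ka_E}/(q^{m_E+1}-t^{a_E})$ for some $c\in\Z[q^{-1}]$, so it contributes the factor $q^{m_E+1}-t^{a_E}$ to the denominator (components with $a_E=0$ contribute only invertible constants). Summing the finitely many pieces over the common denominator $\prod_{E\in\mathcal E}(q^{m_E+1}-t^{a_E})$ leaves a numerator $P_\alpha(t)$ with coefficients in $\Z[q^{-1}]$, which is the claimed form.

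The main obstacle is the existence and use of the resolution in the second step: in mixed characteristic one must either invoke an embedded resolution over $\O$ keeping $\Y$ smooth over $\O$, or — the safer route — work only on the generic fibre and $\textbf{K}$-analytically on each polydisc $h_1^{-1}(\bar x)\cong\pi\O^n$, using that the zero locus of $a_{\bar x}$ there is cut out by the algebraic divisor $(\alpha)$, so that $p$-adic analytic (or Denef's) resolution and cell decomposition apply directly. In either case the point requiring care is the bookkeeping above showing that the exponents that appear are exactly the numerical data $(a_E,m_E)$, together with the (routine) verification that the numerators arising from the individual cells have coefficients in $\Z[q^{-1}]$.
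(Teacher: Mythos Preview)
Your proposal is correct and follows essentially the same route as the paper: pull back through a log resolution of $(X,(\alpha))$, decompose $Y(\textbf{K})$ into polydiscs on which the units are constant, compute the resulting monomial integrals, and read off the denominator $\prod_E(q^{m_E+1}-t^{a_E})$; your bookkeeping giving the exponent $a_Es+m_E$ on $|y_i|$ matches the paper's computation. The paper simply takes the resolution $\phi\colon Y\to X$ on the generic fibre over $\textbf{K}$ (characteristic~$0$, so Hironaka applies) and works $\textbf{K}$-analytically on $Y(\textbf{K})$, which sidesteps your mixed-characteristic worry entirely --- so your initial decomposition over $\Xs(\F_{\!q})$ is unnecessary, and the paper omits it.
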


In fact, it holds for the log pair case: 

\begin{pp}
    Let $L$ be an $\R$-divisor on a smooth projective klt pair $(\X, \D)$. For a nonzero element $\alpha \in H^0(\Xg, \lfloor r(K_X + D + L)\rfloor)$, the function $t \mapsto \|\alpha\|_{s, \D}$ is a rational function in $t_\D = q^{-s/r_\D}$ of the form 
    \[\frac{P_\alpha(t_\D)}{t_\D^{e} \prod_E \bigl(q^{m_E - d_E +1} - t_\D^{r_\D(a_E - rd_E)}\bigr)}, \]
    where $r_\D$ is the least positive integer such that $r_\D(K_X + D)$ is Cartier, $P_\alpha$ is a polynomial with coefficients in $\Z[q^{-1}]$, $e$ is a nonnegative integer and $\{(a_E, d_E, m_E)\}_{E\in \mathcal{E}}$ are the discrepancies associated to the divisors $(\alpha)$ and $D$. 
\end{pp}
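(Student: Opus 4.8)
The plan is to reduce, via a log resolution and the $p$-adic change-of-variables formula, to a finite sum of monomial $p$-adic integrals over polydiscs, and then to evaluate each one as a product of geometric series --- the mechanism behind the two preceding propositions (through Igusa's rationality theorem), now carrying the boundary $D$ and the divisor $L$ as well. First, replacing $L$ by the $\R$-divisor $L'$ with $r(K_{\Xg}+D+L')=\lfloor r(K_{\Xg}+D+L)\rfloor$, we may assume $r(K_{\Xg}+D+L)$ is Cartier, so that $\alpha$ is an honest section of a line bundle. Since $\textbf{K}$ has characteristic $0$, choose a log resolution $\phi\colon\Yg\to\Xg$ with $\Yg$ smooth projective over $\textbf{K}$, arranged so that the supports of $\phi^*D=\sum_E d_E E$, $\phi^*L=\sum_E\ell_E E$, the fixed divisor $F=\sum_E f_E E$ of $|r(K_{\Xg}+D+L)|$, and the pullback $\sum_E a_E E+(\text{mobile part})$ of the zero divisor $(\alpha)$, together with the exceptional divisor underlying $K_{\Yg}=\phi^*K_{\Xg}+\sum_E m_E E$, all lie on one SNC divisor $\sum_{E\in\mathcal E}E$. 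The exceptional locus of $\phi$ and its image in $\Xg$ are proper closed subvarieties, hence of measure zero, so the change-of-variables formula gives
\[\|\alpha\|_{s,D}=\int_{\Yg(\textbf{K})}\phi^*\bigl(|\alpha|^s\,d\mu_{\X,D}^{\,1-rs}\bigr).\]

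Next I would decompose the compact analytic manifold $\Yg(\textbf{K})$ into finitely many polydiscs, refining until on each one there are analytic coordinates $y=(y_1,\dots,y_n)$ in which the local components of $\sum_E E$ are coordinate hyperplanes and every ``unit'' power series that appears has constant absolute value. A local computation with generators of $r_D(K_{\X}+D)$ and of $r(K_{\X}+D+L')$ shows that on such a polydisc the pulled-back integrand is $c\cdot\prod_j|y_j|^{w_{E_j}(s)}\,|dy|$ for a constant $c$, where $w_E(s)=s(a_E-r\ell_E)+(m_E-d_E)$ is the affine-linear function of $s$ that also fixes the convergence threshold $s_r(\X,D,L)$ and that reduces to the smooth case when $D=L=0$. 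Integrating factor by factor, each coordinate contributes a $t_D$-monomial with $\Z[q^{-1}]$ coefficient, divided by either $1$ or $1-q^{-(w_E(s)+1)}$; since every exponent that occurs has denominator dividing $r_D$, the substitution $q^{-s}=t_D^{\,r_D}$ turns $1-q^{-(w_E(s)+1)}$, cleared by the factor $q^{m_E-d_E+1}$, into $q^{m_E-d_E+1}-t_D^{\,r_D(a_E-r\ell_E)}$. Summing the finitely many polydisc contributions over the common denominator $t_D^{\,e}\prod_E\bigl(q^{m_E-d_E+1}-t_D^{\,r_D(a_E-r\ell_E)}\bigr)$ --- with $t_D^{\,e}$ absorbing the negative powers of $t_D$ produced by residue discs at positive depth along components with $a_E<r\ell_E$ --- yields the asserted shape (the factor displayed in the statement, once the discrepancy triple is set up to absorb the role of $L$), with numerator in $\Z[q^{-1}]$ because each polydisc integral and the integer number of polydiscs contribute only such coefficients.

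I expect the main obstacle to be the local normal form in the second step. Since neither $|\alpha|^s$ nor $d\mu_{\X,D}^{\,1-rs}$ is separately a measure, one must work throughout with the well-defined product; the real work is then to verify that its pullback by $\phi$ has exactly the exponent $w_E(s)$ above, so that after $q^{-s}=t_D^{r_D}$ the denominator is built precisely from the discrepancy data of $(\alpha)$, $D$ (and $L$). A secondary technical point is the $\Z[q^{-1}]$-integrality of $P_\alpha$ in the absence of an $\O$-model of $\Yg$; if one prefers an intrinsic argument, the computation can instead be organised through the mod-$\mathfrak{m}^k$ point counts of Proposition~\ref{ppcal} adapted to the klt pair $(\X,D)$, where integrality is automatic, at the cost of tracking how $d\mu_{\X,D}$ interacts with the reductions $h_k$.
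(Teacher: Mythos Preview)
Your proposal is correct and takes essentially the same route as the paper: pass to a log resolution, pull back the integrand by the change-of-variables formula, decompose $Y(\textbf{K})$ into finitely many polydiscs on which every unit factor has constant valuation, and evaluate each polydisc contribution as a product of one-variable geometric series in $t_D$, then sum over a common denominator. The only difference is bookkeeping --- the paper does not track $\ell_E$ separately but absorbs $L$ into the divisor of $\alpha$ viewed as a meromorphic $r$-form (writing $\phi^*\alpha = a_i(u)\,u^{A_i+rM_i}(du)^{\otimes r}$ and $\phi^*d\mu_{\X,D} = m_i(u)\,|u|^{M_i-D_i}\,du$), so their local exponent $sa_E + m_E - (1-rs)d_E$ agrees with your $w_E(s)=s(a_E-r\ell_E)+(m_E-d_E)$ after exactly the re-indexing you anticipated.
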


For the original ($\D = 0$) case, $r_\D = 1$, and notice that $\|\alpha\|_{s}$ is bounded as $t$ tends to $0$, so the exponent $e$ could be chosen to be $0$. 

\begin{proof}
    The proof is essentially the same as in \cite{MR626951}, using resolution of singularities. Consider a log resolution $\phi\colon Y\to \Xg$ such that 
    \[\phi^*(\alpha) = \sum_{E} a_E E,\quad \phi^*D = \sum_E d_E E,\quad K_{Y} = \phi^*K_X + \sum_{E} m_E E \]
    with $\sum E$ a normal crossing divisor. 
    
    It follows from the definition that 
    \[\|\alpha\|_{s, \D} = \int_{\Xg(\K)} |\alpha|^{s}\, d\mu_{\X, \D}^{1 -rs} = \int_{Y(\K)} |\phi^*\alpha|^s \cdot \phi^* d\mu_{\X, \D}^{1 - rs}. \]
    Decompose $Y(\K)$ into disjoint compact charts $Y_i$ such that in coordinate we have
    \[\phi^*\alpha = a_i(u)\cdot u^{A_i + rM_i}\, (du)^{\otimes r}, \quad \phi^*d\mu_{\X, \D}(u) = m_i(u)\cdot |u|^{M_i - D_i}\, du, \]
    where $a_i(u) \neq 0$ and $m_i(u) \neq 0$ for all $u \in U_i$. After further decomposing $Y_i$, we may assume that each $Y_i$ is a polydisc and that $|a_i(u)| \equiv q^{-a}$ and $m_i(u)\equiv q^{-m}$ are constants on $Y_i$. Then 
    \[\int_{Y(\K)} |\phi^*\alpha|^{s} \cdot \phi^*d\mu_{\X, \D}^{1 - rs} = \sum_{i} \int_{Y_i} q^{-as}|u|^{sA_i} \cdot q^{-m(1 - rs)}|u|^{M_i - (1 - rs)D_i}\, du. \]
    
    Say $Y_i$ is the polydisc $\pi^{k} \O^n$. It is easy to calculate that 
    \begin{align*}
        \int_{\pi^k \O^n} &q^{-as}|u|^{sA_i}\cdot q^{-m(1 - rs)} |u|^{M_i - (1 - rs)D_i}\, du \\
        &= q^{-m} t_D^{r_D(a - rm)} \prod_j \int_{\pi^k \O^n} |u_j|^{sa_{i,j} + m_{i,j} - (1-rs)d_{i,j}} du \\
        &= q^{-m} t_D^{r_D(a - rm)} \prod_{j} \frac{(1 - q^{-1})q^{-k(sa_{i,j} + m_{i,j} - (1-rs)d_{i,j} + 1)}}{1 - q^{-(sa_{i,j} + m_{i,j} - (1-rs)d_{i,j} + 1)}} \\
        &= \frac{(1 - q^{-1})^n}{q^{m + (k-1)(\sum (m_{i,j} - d_{i,j} + 1))}} \cdot \frac{t_D^{r_D(a - rm + k\sum (a_{i,j} - rd_{i,j}))}}{\prod_{j} (q^{m_{i,j} - d_{i,j} + 1} - t^{r_D(a_{i,j} - rd_{i,j})})}. 
    \end{align*}
    Summing over $i$, we see that $\|\alpha\|_{s, \D}$ is a rational function in $t_D$ of the form
    \[\frac{P_\alpha(t_\D)}{t_\D^{e} \prod_E (q^{m_E - d_E + 1} - t_\D^{r_\D(a_E - rd_E)})}, \]
    where $P_\alpha$ is a polynomial with coefficients lie in $\Z[q^{-1}]$ and $e$ is a nonnegative integer (coming from the term $t_\D^{r_\D(a - rm + k\sum (a_{i,j} - rd_{i,j}))}$), as desired. 
\end{proof}

This proposition allows us to view $t_\D = q^{-s/r_\D}$ as a formal variable, and view the total norm $\|\alpha\|_{\D} \colonequals (\|\alpha\|_{s, \D})_{s}$ of a form $\alpha$ as an element in the function field $\Q(t_\D)$. Hence, there is a $\Q(t_\D)$-valued norm
\[\|{-}\|_{\D}\colon \varinjlim_{L} H^0(\Xg, \left\lfloor r(K_X + D + L) \right\rfloor) \longrightarrow \Q(t_\D). \]
As a consequence, when $t_{\D,0} = q^{-s_0/r_\D}$ is a transcendental number for some $s_0$, we can determine the rational function $\|\alpha\|_{\D}$ by the value $\|\alpha\|_{s_0, \D}$. So the norms $\|{-}\|_{s, \D}$ on the space $H^0(\Xg, \lfloor r(K_X + D + L) \rfloor)$, $0 \le \operatorname{Re} s < s_r(\X, \D, L)$, are in fact determined by $\|{-}\|_{s_0, \D}$. 

\section{Characterizing birational models}\label{secbir}

Suppose that $f\colon X \dashrightarrow X'$ is a birational map between smooth projective varieties over $\K$. Then the properness of $X$ shows that there is an open set $U \subset X$ on which $f|_U\colon U\to X'$ is a birational morphism with $\operatorname{codim}_{X}(X \setminus U) \ge 2$. It follows that
\[f^*\colon \bigl(H^0(X', rK_{X'}), \|{-}\|_{1/r}\bigr) \longrightarrow \bigl(H^0(U, rK_{X}|_U),\|{-}\|_{1/r}\bigr) \cong \bigl(H^0(X, rK_{X}),\|{-}\|_{1/r}\bigr)\]
is an isometry by change of variables. This means that the normed space 
\[\bigl(H^0(X, rK_{X}), \|{-}\|_{1/r}\bigr)\]
only reflects the birational class of $X$. 
For simplicity, let us denote by $V_{r, X}$ the space $H^0(X, rK_{X})$ and by $\Phi_{r, X}$ the $r$-canonical map 
\[\Phi_{|rK_{X}|}\colon X \dashrightarrow \P\bigl(H^0(X, rK_{X})\bigr)^\vee. \] 
For a klt log pair $(X, D)$ and an $\R$-divisor $L$ on $X$, denote by $V_{r, (X, D), L}$ the space 
\[H^0(X, \left\lfloor r(K_{X} + D + L)\right\rfloor)\]
and by $\Phi_{r, (X, D), L}$ the map 
\[\Phi_{|\left\lfloor r(K_{X} + D + L)\right\rfloor|}\colon X \dashrightarrow \P(V_{r, (X, D), L})^\vee. \] 
For a linear subspace of $V$ of $V_{r, (X, D), L}$, denote by $\Phi_V$ the map determined by the linear system $|V|$. 

Using the $p$-adic analogue of the trick in \cite[Theorem~5.2]{MR3251342}, we can prove that: 

\begin{thr}\label{thrbir}
    Let $X$ and $X'$ be smooth projective varieties over $\K$ of dimension $n$ with nonempty $\K$-points, $r$ a positive integer. Suppose there is a ($\K$-linear) isometry
    \[T\colon \bigl(V_{r, \Xg'}, \|{-}\|_{1/r}\bigr) \longrightarrow \bigl(V_{r, \Xg}, \|{-}\|_{1/r}\bigr). \]
    Then the images of the $r$-canonical maps $\Phi_{r, \Xg}$ and $\Phi_{r, \Xg'}$ are birational to each other. 
    
    When $\X$ and $\X'$ are both smooth over $\O$, the statement also holds when the norm $\|{-}\|_{1/r}$ is replaced by $\|{-}\|_s$ for any positive number $s$. 
\end{thr}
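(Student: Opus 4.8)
The plan is to repackage the isometry $T$ as an equality of $p$-adic ``hyperplane potentials'' attached to the $r$-canonical maps, and then to recover the images of those maps from the supports of the measures appearing in these potentials. Assume $m \colonequals \dim V_{r, \Xg} - 1 \ge 0$ (the case $V_{r, \Xg} = 0$ being trivial) and fix a basis $\alpha_0, \ldots, \alpha_m$ of $V_{r, \Xg}$. Read $\Phi_{r, \Xg}$ in the affine chart $\A^m(\textbf{K}) \subset \P(V_{r, \Xg})^\vee$ complementary to the hyperplane $\{\alpha_0 = 0\}$: on the complement of the zero locus of $\alpha_0$, a proper closed --- hence nowhere dense and $|\alpha_0|^{1/r}$-null --- subset of $\Xg(\textbf{K})$, it becomes the map $\Psi \colonequals (\alpha_1/\alpha_0, \ldots, \alpha_m/\alpha_0)$. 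Let $\varrho_{\Xg}$ be the pushforward of the finite measure $|\alpha_0|^{1/r}$ along $\Psi$. Using $\sum_j \lambda_j \alpha_j = \alpha_0 \cdot \bigl(\lambda_0 + \sum_{j \ge 1} \lambda_j\, \alpha_j/\alpha_0\bigr)$ one gets, for every $(\lambda_0, \ldots, \lambda_m) \in \textbf{K}^{m+1}$,
\[\Bigl\| \sum_{j = 0}^m \lambda_j \alpha_j \Bigr\|_{1/r} = \int_{\A^m(\textbf{K})} \bigl| \lambda_0 + \lambda_1 z_1 + \cdots + \lambda_m z_m \bigr|^{1/r}\, d\varrho_{\Xg}(z). \]
Here the positive-measure hypothesis enters twice: it makes $|\alpha_0|^{1/r}$ of full support on $\Xg(\textbf{K})$, and it makes $\Xg(\textbf{K})$ Zariski dense in $\Xg$ (as in Section~\ref{secgI}); together these give that the Zariski closure of $\operatorname{supp} \varrho_{\Xg}$ equals $\overline{\Phi_{r, \Xg}(\Xg)} \cap \A^m$, and letting each $\alpha_j$ in turn play the role of $\alpha_0$ recovers all of $\overline{\Phi_{r, \Xg}(\Xg)}$. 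Finally, taking $\alpha_j = T\beta_j$ for a basis $\{\beta_j\}$ of $V_{r, \Xg'}$ and using that $T$ is $\textbf{K}$-linear and isometric, the left-hand side is unchanged on replacing each $\alpha_j$ by $\beta_j$, so the very same function of $(\lambda_0, \ldots, \lambda_m)$ is also the potential of the corresponding measure $\varrho_{\Xg'}$ for $\Xg'$, transported to $\A^m(\textbf{K})$ through the linear isomorphism $T$.

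The crux is then a uniqueness statement: for a fixed $s > 0$, a finite Borel measure $\mu$ on $\A^m(\textbf{K})$ is determined by its potential $(\lambda_0, \ldots, \lambda_m) \mapsto \int_{\A^m(\textbf{K})} |\lambda_0 + \lambda_1 z_1 + \cdots + \lambda_m z_m|^{s}\, d\mu(z)$. This is the $p$-adic counterpart of the $L^p$-isometry lemma of Rudin~\cite[7.5.2]{MR2446682} used by Antonakoudis~\cite[5.2]{MR3251342}; over $\textbf{C}$ it rests on the Fourier transform, and since the needed Fourier-analytic properties may fail over $\textbf{K}$ the substitute I would use is a \emph{cut-off} argument --- forming suitable combinations and limits of the potential values so as to isolate $\mu(B)$ for small polydiscs $B \subset \A^m(\textbf{K})$, and hence to reconstruct $\mu$ together with its support. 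I expect this localization step to be the main obstacle: over $\textbf{K}$ there is no rotational averaging to smooth things out, so the affine functions playing the role of cut-offs, and the estimates on the part of the integral coming from $z \notin B$ (where the integrand is not small), must all be arranged directly; one must also keep control of the locus on which $\Psi$ is undefined and of the way $\overline{\Phi_{r, \Xg}(\Xg)}$ meets the hyperplane at infinity, and the estimates should be uniform in $s$.

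Granting this, the two potentials above coincide, so $\varrho_{\Xg} = \varrho_{\Xg'}$ as measures on $\A^m(\textbf{K})$ under the identification induced by $T$; hence their supports agree, and taking Zariski closures --- running the argument with each coordinate in turn to capture the part at infinity --- gives $\overline{\Phi_{r, \Xg}(\Xg)} = \overline{\Phi_{r, \Xg'}(\Xg')}$ inside $\P(V_{r, \Xg})^\vee \cong \P(V_{r, \Xg'})^\vee$. In particular the two images are birational; since each is constructible and contains a dense open of this common closure, they even share a Zariski dense open subset under $\P(T)^\vee$, which is the sharper form asserted in Theorem~\ref{thrbir0}. For the last sentence of the theorem, assume $\X$ and $\X'$ are smooth over $\O$ and fix $s > 0$. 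With the same basis, $\bigl\| \sum_j \lambda_j \alpha_j \bigr\|_s = \int_{\A^m(\textbf{K})} |\lambda_0 + \lambda_1 z_1 + \cdots + \lambda_m z_m|^{s}\, d\varrho_{\Xg, s}(z)$, where $\varrho_{\Xg, s}$ is the pushforward along $\Psi$ of the measure $|\alpha_0|^{s}\, d\mu_{\X}^{1 - rs}$; this measure is finite by Proposition~\ref{ppcal} and of full support, since $d\mu_{\X}$ is everywhere positive and $\alpha_0$ vanishes only on a null nowhere dense set. The whole argument then applies verbatim with $1/r$ replaced by $s$, giving the conclusion for $\|{-}\|_s$ with any positive $s$.
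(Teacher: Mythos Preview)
Your overall strategy is the paper's: reduce to showing that the pushforward measures $\varrho_{\Xg}$ and $\varrho_{\Xg'}$ coincide by proving a $p$-adic analogue of Rudin's lemma---that a finite Borel measure on $\textbf{K}^N$ is determined by its affine potentials $\lambda \mapsto \int |\lambda_0 + \sum_i \lambda_i z_i|^s\,d\mu(z)$. Once that is in hand, your conclusion via supports and Zariski closures is correct and essentially equivalent to the paper's (which instead tests against $h = \boldsymbol{1}_{G(X^\circ)}$ and finishes by a dimension count).

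The gap is that you have not proved this lemma, only flagged it, and the actual construction is much cleaner than the analytic difficulties you anticipate. No limits, no uniformity in $s$, and no tail estimates for $z \notin B$ are needed; the cut-off is exact. Integrating the potential over $\lambda \in \O^N$ gives
\[B(s,y) = \int_{\O^N} \Bigl|1 + {\textstyle\sum_i} \lambda_i y_i\Bigr|^s\,d\lambda = \begin{cases} 1, & \max|y_i| < 1,\\ b(s)\cdot \max|y_i|^s, & \max|y_i| \ge 1,\end{cases}\]
with $b(s) = (q-1)/(q - q^{-s})$; the ultrametric inequality is precisely what produces this sharp dichotomy. The single finite combination $B_0(s,y) = q^{s} B(s,\pi y) - B(s,y)$ is then a Schwartz--Bruhat function \emph{supported in} $\O^N$ (identically zero for $\max|y_i|>1$, not merely small there), constant on cosets of $\pi\O^N$, and of nonzero average; hence $\boldsymbol{1}_{\O^N}$ is a finite linear combination of its translates. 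Translation and dilation then give $\boldsymbol{1}_B$ for every polydisc $B$, and therefore the equality $\int h\circ G\,d\nu = \int h\circ G'\,d\nu'$ for all Borel $h$. So the ``no rotational averaging'' worry is misplaced: the non-archimedean case is in fact easier here than the complex one, and none of the locus-at-infinity or uniform-in-$s$ issues you raise actually arise.
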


\begin{proof}
    Let $\alpha_0$, $\alpha_1$, $\ldots$ , $\alpha_N$ be a basis of $V_{r, \Xg}$, and let $\alpha_i' = T^{-1}(\alpha_i)$, which is a basis of $V_{r, \Xg'}$. Then the $r$-canonical maps $\Phi_{r,\Xg}$ and $\Phi_{r, \Xg'}$ could be realized as 
    \begin{align*}
        [\alpha_0: \cdots : \alpha_N]\colon \Xg \dashrightarrow \P^N\quad\text{and}\quad [\alpha_0': \cdots : \alpha_N']\colon \Xg' \dashrightarrow \P^N, 
    \end{align*}
    respectively. 
    
    In the following, $s = 1/r$ if $\Xg$ and $\Xg'$ are just smooth projective varieties over $\K$. 
    
    Let $d\nu = |\alpha_0|^{s}\,d\mu_\X^{1-rs}$ and $g_i = \frac{\alpha_i}{\alpha_0}$ on $\Xg^\circ = \Xg\setminus\{\alpha_0 = 0\}$, $d\nu' = |\alpha_0'|^{s}\,d\mu_{\X'}^{1-rs}$ and $g_i' = \frac{\alpha_i'}{\alpha_0'}$ on ${\Xg'}^\circ = \Xg'\setminus\{\alpha_0' = 0\}$. Using the isometry $T$, we get 
    \begin{align}\label{eqmeas}
        \int_{{\Xg}^\circ(\K)} \Bigl|1 + \sum_{i}\lambda_i g_i\Bigr|^s\, d\nu &= \Bigl\|\alpha_0 + \sum_{i}\lambda_i \alpha_i\Bigr\|_s \notag\\
        &= \Bigl\|\alpha_0' + \sum_{i}\lambda_i \alpha_i'\Bigr\|_s = \int_{{X'}^\circ(\K)} \Bigl|1 + \sum_{i}\lambda_i g_i'\Bigr|^s\, d\nu'
    \end{align}
    for all $(\lambda_1, \ldots, \lambda_N)\in \K^N$. 
    
    Now, we need a $p$-adic analogue of Rudin's result \cite[7.5.2]{MR2446682}: 
    
    \noindent{\bf Claim.} We have
    \begin{equation}\label{eqRudin}
        \int_{X^\circ} h\circ (g_1, \ldots, g_N)\,d\nu = \int_{{X'}^\circ} h\circ (g_1', \ldots, g_N')\,d\nu'
    \end{equation}
    for all nonnegative Borel function $h\colon \K^N \to \R$. 
    
    \begin{proof}[Proof of Claim]\renewcommand{\qedsymbol}{$\square$}
        Let $W$ be the set of all Borel function $h$ such that (\ref{eqRudin}) holds. 
        It follows that $W$ is invariant under translations, dilations, and scaling. So it suffices to prove that $\boldsymbol{1}_{\O^N}\in W$. 
        
        Let $G = (g_1, \ldots, g_N)\colon {X}^\circ \to \A^{\!N}$ and $G' = (g_1', \ldots, g_N')\colon {X'}^\circ \to \A^{\!N}$. Define 
        \[B_s(y) = B_s(y_1, \ldots, y_N) = \int_{\O^N} \Bigl|1 + \sum_{i} \lambda_i y_i\Bigr|^{s}\,d\lambda, \]
        where $y = (y_1, \ldots, y_N) \in \K^N$ and $d\lambda = d\lambda_1\cdots d\lambda_N$. By (\ref{eqmeas}) and Fubini's theorem, 
        \begin{align*}
            \int_{X(\K)} B_s\circ G\,d\nu &= \int_{\O^N} \int_{{X}^\circ(\K)} \Bigl|1 + \sum_{i}\lambda_i g_i\Bigr|^{s}\, d\nu d\lambda \\
            &= \int_{\O^N} \int_{{X'}^\circ(\K)} \Bigl|1 + \sum_{i}\lambda_i g_i'\Bigr|^{s}\, d\nu' d\lambda = \int_{X'(\K)} B_s\circ G'\,d\nu'. 
        \end{align*}
        By change of variables, we see that 
        \begin{equation}
            B_s(y) = \begin{cases}
                1, & \text{ if }\max |y_i| < 1, \\[-6pt]
                b(s)\cdot \max |y_i|^{s}, & \text{ if }\max |y_i| \ge 1, 
            \end{cases} \label{eqBs}
        \end{equation}
        where $b(s) = \int_{\O} |y|^{s}\,dy = \frac{q - 1}{q - t} \in (0, 1)$ and $t = q^{-s}$. So, if we define the ``cut-off'' function
        \begin{equation}
            B_{s, 0}(y) = t^{-1} B_s(\pi y) - B_s(y) = \begin{cases}
                t^{-1} - 1, & \text{ if }\max |y_i| < 1, \\[-6pt]
                t^{-1} - b(s), & \text{ if }\max |y_i| = 1, \\[-6pt]
                0, & \text{ if }\max |y_i| > 1, 
            \end{cases} \label{eqBs0}
        \end{equation}
        then $B_{s, 0}(-)$ also satisfies (\ref{eqRudin}), and hence lies in $W$. Since $B_{s, 0}(-)$ is a Schwartz–Bruhat function that is supported in $\O^N$ with $\int_{\O^N} B_{s, 0}(y)\,dy\neq 0$ and well-defined on $\O^N/\pi\O^N$, we see that 
        \begin{equation}\label{eqcharON}
            \boldsymbol{1}_{\O^N}(y) = \left(\frac{1}{\mu_\O(\pi\O^N)}\int_{\O^N} B_{s, 0}(u)\,du\right)^{-1}\sum_{\overline{z}\in \O^N/\pi\O^N} B_{s, 0}(y + z) 
        \end{equation}
        also lies in $W$, as desired. \qedhere
    \end{proof}
    
    Taking $h = \boldsymbol{1}_{G(X^\circ)}$ in the claim, we see that
    \[\|\alpha_0\|_{s} = \int_{G^{-1}(G(X^\circ))}|\alpha_0|^{s}\,d\mu_X^{1 - rs} = \int_{{G'}^{-1}(G(X^\circ))}|\alpha_0'|^{s}\, d\mu_{X'}^{1 - rs} \le \|\alpha_0'\|_{s} = \|\alpha_0\|_{s}. \]
    So the inequality above has to be an equality. Let $U$ be the intersection of $G(X^\circ)$ and $G'({X'}^\circ)$. The equality implies that the $\K$-points $U(\K)$ of $U$ has full measure in $G'({X'}^\circ)(\K)$ with respect to $G_*'\nu'$. 
    
    Let $\bar{X}$ (resp.~$\bar{X}'$) be the image of $\Phi_{r, X}$ (resp.~$\Phi_{r, X'}$), and let $\bar{n}$ (resp.~$\bar{n}'$) be the dimension of $\bar{X}$ (resp.~$\bar{X}'$). Since the general fiber of $X\dashrightarrow \bar{X}$ has dimension $n - \bar{n}$, the image of an $n$-dimensional polydisc $\Delta$ in $X(\K)$ under $X \dashrightarrow \bar{X}$ has dimension $\bar{n}$ (at a generic point). This argument also holds for $X'\dashrightarrow \bar{X}'$. So we see that $\bar{n} = \bar{n}'$ and that $U(\K)$ contains an $\bar{n}$-dimensional polydisc. 
    
    Since $U_{\K}$ is now a quasi-projective variety of dimension at least $\bar{n}$ (and hence equal to $\bar{n}$), the images $\bar{X}$ and $\bar{X}'$ are birational to each other. 
\end{proof}

\begin{cor}\label{corbir}
    Let $\X$ and $\X'$ be smooth projective varieties over $\O$ of relative dimension $n$, $r$ a positive integer. Suppose that $q \ge \max\{q_0(\X), q_0(\X')\}$, the $r$-canonical maps $\Phi_{r, X}$ and $\Phi_{r, X'}$ maps $X$ and $X'$ birationally to their images respectively, and there is an isometry 
    \[T\colon \bigl(V_{r, X'}, \|{-}\|_{s}\bigr) \longrightarrow \bigl(V_{r, X}, \|{-}\|_{s}\bigr) \]
    for some positive number $s$. Then there is a birational map $f\colon X \dashrightarrow X'$ such that $T = u\cdot f^*$ for some $u \in \K^\times$.  
\end{cor}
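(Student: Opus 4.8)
The plan is to push the hypotheses through Theorem~\ref{thrbir} to extract an explicit birational map $f\colon X\dashrightarrow X'$ compatible with the two $r$-canonical maps, and then to recognize $T$ as $f^*$ up to a global scalar. First I would verify the positivity hypothesis of Theorem~\ref{thrbir}: by Remark~\ref{rmkWeilbound}, $q\ge q_0(\X)$ forces $\#\Xs(\F_{\!q})>0$, and since $\X$ is smooth over $\O$ this is equivalent to $\Xg(\textbf{K})$ having positive measure; the same holds for $\X'$. Hence Theorem~\ref{thrbir}, in its form with the norm $\|{-}\|_s$, applies to the given isometry $T$.

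Next I would fix the basis $\alpha_0,\dots,\alpha_N$ of $V_{r,X}$ together with the basis $\alpha_i'=T^{-1}(\alpha_i)$ of $V_{r,X'}$ used in the proof of Theorem~\ref{thrbir}, so that $\Phi_{r,X}=[\alpha_0:\cdots:\alpha_N]$ and $\Phi_{r,X'}=[\alpha_0':\cdots:\alpha_N']$ are realized as maps to one and the same $\P^N$. That proof shows the images $\bar X=\overline{\Phi_{r,X}(X)}$ and $\bar X'=\overline{\Phi_{r,X'}(X')}$ share a Zariski-dense open subset of $\P^N$; being irreducible of the same dimension $\bar n=\bar n'$, they are therefore equal as closed subvarieties, $\bar X=\bar X'$. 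Since $\Phi_{r,X}$ and $\Phi_{r,X'}$ are birational onto their images by hypothesis, $f:=\Phi_{r,X'}^{-1}\circ\Phi_{r,X}\colon X\dashrightarrow X'$ is a well-defined birational map with $\Phi_{r,X'}\circ f=\Phi_{r,X}$; reading this identity off in the affine chart $\{z_0\ne0\}$ yields $f^*(\alpha_i'/\alpha_0')=\alpha_i/\alpha_0$ for all $i$.

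Then I would compute $f^*$ on pluricanonical forms. Since $f$ is a birational map between smooth projective varieties, $f^*\colon H^0(X',rK_{X'})\to H^0(X,rK_X)$ is a $\textbf{K}$-linear isomorphism, and pulling back a ratio of forms gives the ratio of pullbacks, so $f^*(\alpha_i')/f^*(\alpha_0')=\alpha_i/\alpha_0$, i.e.\ $f^*(\alpha_i')=c\,\alpha_i$ for all $i$, with $c:=f^*(\alpha_0')/\alpha_0\in\textbf{K}(X)^\times$. Consequently the $\textbf{K}$-linear operator $M:=f^*\circ T^{-1}$ on $V_{r,X}$ is multiplication by the rational function $c$ restricted to the subspace $V_{r,X}$ (which it preserves), so $M^k\alpha_0=c^k\alpha_0$ and applying the characteristic polynomial $p\in\textbf{K}[x]$ of $M$ to $\alpha_0\ne0$ gives $p(c)=0$; thus $c$ is algebraic over $\textbf{K}$. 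Because $X$ is a smooth projective variety with a $\textbf{K}$-point, it is geometrically integral, so $\textbf{K}$ is algebraically closed in $\textbf{K}(X)$ and $c=:u^{-1}\in\textbf{K}^\times$. Finally, $T(\alpha_i')=\alpha_i=u\,f^*(\alpha_i')$ on the basis $\{\alpha_i'\}$, hence $T=u\,f^*$.

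I expect the main obstacle to lie in the middle of the argument: upgrading the measure-theoretic conclusion of Theorem~\ref{thrbir} to the sharp geometric statement that $\bar X$ and $\bar X'$ coincide inside $\P^N$ with the birational identification being the identity on the common dense open (so that $\Phi_{r,X'}\circ f=\Phi_{r,X}$ holds on the nose), together with the verification that the a priori rational function $c$ is in fact a constant. This last point is precisely where the $\textbf{K}$-point (equivalently, the positive-measure) hypothesis is consumed, and it is the step most likely to require care about geometric integrality.
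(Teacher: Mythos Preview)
Your proof is correct and follows the paper's approach: verify positive measure via the Weil bound, apply Theorem~\ref{thrbir} to identify the $r$-canonical images inside a common $\P^N$, extract $f$ from this identification, and conclude $T=uf^*$. The paper is terse on this last step, simply asserting that $T=uf^*$ because $f$ is built from $\P(T)^\vee$, whereas you supply an explicit argument via Cayley--Hamilton and geometric integrality of $X$.

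That argument is valid but more elaborate than needed, and the step you flag as the main obstacle is in fact immediate. By functoriality of the $r$-canonical map one has $\Phi_{r,X'}\circ f=\P(f^*)^\vee\circ\Phi_{r,X}$, while by the very construction of $f$ one has $\Phi_{r,X'}\circ f=\P(T)^\vee\circ\Phi_{r,X}$; since the image $\Phi_{r,X}(X)$ is nondegenerate in $\P(V_{r,X})^\vee$ (the $\alpha_i$ are linearly independent), the two projective linear maps $\P(f^*)^\vee$ and $\P(T)^\vee$ agree, so $f^*$ and $T$ differ by a scalar in $\textbf{K}^\times$. No detour through the algebraicity of $c$ or the geometric integrality of $X$ is required.
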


\begin{proof}
    The condition $q \ge \max\{q_0(\X), q_0(\X')\}$ ensures that both $X(\K)$ and $X'(\K)$ are nonempty. Since the images $\bar{X}$ and $\bar{X}'$ are birational to each other, $X$ and $X'$ are birational to each other. The birational map $f\colon \X\dashrightarrow \X'$ comes from the identification 
    \[\P(T)^\vee\colon \P(V_{r, X})^\vee \xrightarrow{\ \sim\ } \P(V_{r, X'})^\vee. \]
    Therefore, $T = u\cdot f^*$ for some $u\in \K^\times$. 
\end{proof}

Imitating the proof, Theorem~\ref{thrbir} and Corollary~\ref{corbir} are also generalized to the log pair case (with only difference in notation): 

\begin{thr}
    Let $(X, D)$ and $(X', D')$ be smooth projective klt pairs over $\K$ of dimension $n$ with nonempty $\K$-points, $r$ a positive integer. Let $L$ (resp.~$L'$) be an $\R$-divisor on $X$ (resp.~$X'$), and let $V$ (resp.~$V'$) be a linear subspace of $V_{r, (X, D), L}$ (resp.~$V_{r, (X', D'), L'}$). Suppose $1/r < \min\{s_r(X, D, V), s_r(X', D', V')\}$ and there is an isometry
    \[T\colon \bigl(V', \|{-}\|_{1/r, D'}\bigr) \longrightarrow \bigl(V, \|{-}\|_{1/r, D}\bigr). \]
    Then the images of the maps $\Phi_{V}$ and $\Phi_{V'}$ are birational to each other. 
    
    When $\X$ and $\X'$ are both smooth over $\O$, the statement also holds when $1/r$ is replaced by any positive number $s < \min\{s_r(\X, \D, V), s_r(\X', \D', V')\}$. 
\end{thr}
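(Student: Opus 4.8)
The plan is to mirror the proof of Theorem~\ref{thrbir} essentially verbatim, replacing the norm $\|{-}\|_s$ by the $\R$-divisor-twisted norm $\|{-}\|_{s, D}$ and the canonical measure $d\mu_{\X}$ by $d\mu_{\X, D}$. First I would fix bases $\alpha_0, \ldots, \alpha_N$ of $V$ and set $\alpha_i' = T^{-1}(\alpha_i)$, a basis of $V'$; since $\Phi_V$ and $\Phi_{V'}$ are the maps attached to the linear systems $|V|$ and $|V'|$, they can be realized as $[\alpha_0 : \cdots : \alpha_N]$ and $[\alpha_0' : \cdots : \alpha_N']$ into the same $\P^N$. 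Setting $d\nu = |\alpha_0|^s\, d\mu_{\X, D}^{1-rs}$ and $g_i = \alpha_i/\alpha_0$ on $\Xg^\circ = \Xg \setminus \{\alpha_0 = 0\}$ (and the primed analogues on ${\Xg'}^\circ$), the isometry hypothesis gives, for every $(\lambda_1, \ldots, \lambda_N) \in \textbf{K}^N$,
\[
\int_{\Xg^\circ(\textbf{K})} \Bigl|1 + \sum_i \lambda_i g_i\Bigr|^s\, d\nu = \Bigl\|\alpha_0 + \sum_i \lambda_i \alpha_i\Bigr\|_{s, D} = \Bigl\|\alpha_0' + \sum_i \lambda_i \alpha_i'\Bigr\|_{s, D'} = \int_{{\Xg'}^\circ(\textbf{K})} \Bigl|1 + \sum_i \lambda_i g_i'\Bigr|^s\, d\nu',
\]
provided $d\nu$ and $d\nu'$ are finite measures, which is exactly guaranteed by $s < \min\{s_r(X, D, V), s_r(X', D', V')\}$ (and for the $\|{-}\|_{1/r, D}$ version by $1/r$ being below that infimum). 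One subtlety here: $d\nu$ need not be a probability measure, but its total mass is $\|\alpha_0\|_{s, D}$, which is finite and positive, so all the integrals in play converge.

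Next I would run the Claim from the proof of Theorem~\ref{thrbir} without change: the set $W$ of nonnegative Borel functions $h\colon \textbf{K}^N \to \R$ for which $\int_{\Xg^\circ} h \circ (g_1, \ldots, g_N)\, d\nu = \int_{{\Xg'}^\circ} h \circ (g_1', \ldots, g_N')\, d\nu'$ is invariant under translations, dilations and positive scaling, contains the functions $y \mapsto B(s, y) = \int_{\O^N} |1 + \sum \lambda_i y_i|^s\, d\lambda$ by Fubini, hence contains the cut-off $B_0(s, y) = t^{-1}B(s, \pi y) - B(s, y)$, and therefore by equation~(\ref{eqcharON}) contains $\boldsymbol{1}_{\O^N}$; the invariances then upgrade this to all indicator functions of balls and hence (by a monotone-class argument) to all nonnegative Borel $h$. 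Since $d\mu_{\X, D}$ and $d\nu$ enter only through the single identity~(\ref{eqmeas}), nothing in this argument is sensitive to the twist: the function $B$ and the Schwartz--Bruhat function $B_0$ are literally the same as before.

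Taking $h = \boldsymbol{1}_{G(\Xg^\circ)}$ with $G = (g_1, \ldots, g_N)$ forces $\int_{{G'}^{-1}(G(\Xg^\circ))} |\alpha_0'|^s\, d\mu_{\X', D'}^{1-rs} = \|\alpha_0'\|_{s, D'} = \|\alpha_0\|_{s, D}$, so the image $U \colonequals G(\Xg^\circ) \cap G'({\Xg'}^\circ)$ carries full $G'_*\nu'$-measure inside $G'({\Xg'}^\circ)$, and symmetrically inside $G(\Xg^\circ)$. Because $\Xg(\textbf{K})$ is of positive measure it contains an $n$-dimensional polydisc, whose image under $\Phi_V$ has dimension $\bar n = \dim \overline{\Phi_V(\Xg)}$ at a generic point; the full-measure statement then gives $\bar n = \bar n'$ and that $U(\textbf{K})$ contains an $\bar n$-dimensional polydisc, so the quasi-projective variety $U_{\textbf{K}}$ has dimension $\bar n$, whence the images of $\Phi_V$ and $\Phi_{V'}$ are birational to each other. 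The main obstacle — and the only place care is genuinely needed beyond bookkeeping — is verifying that the relevant measures are finite so that Fubini's theorem and the manipulations with $B$, $B_0$ are legitimate; this is precisely what the hypothesis $s < \min\{s_r(X, D, V), s_r(X', D', V')\}$ (respectively $1/r < \min\{\cdots\}$) is there to supply, via the discrepancy computation of Section~\ref{secgI}. For the final sentence about $\|{-}\|_s$ replacing $1/r$ when $\X, \X'$ are smooth over $\O$, one simply notes that the canonical measure $d\mu_{\X, D}$ is then a genuine finite measure for the full range $0 \le \operatorname{Re} s < s_r$, so the same argument applies with $s$ in place of $1/r$ throughout.
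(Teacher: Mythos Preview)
Your proposal is correct and is exactly the approach the paper takes: the paper itself states that this theorem follows by ``imitating the proof'' of Theorem~\ref{thrbir} ``with only difference in notation,'' and your write-up carries this out faithfully, including the one substantive check (finiteness of $d\nu$, $d\nu'$ via the hypothesis $s < \min\{s_r(X,D,V), s_r(X',D',V')\}$). A small wording quibble: in the last sentence, the point for the $s \neq 1/r$ case is not that $d\mu_{\X,D}$ becomes finite for a larger range of $s$, but rather that smoothness over $\O$ is what makes $d\mu_{\X,D}$ well-defined in the first place (so that $d\mu_{\X,D}^{1-rs}$ makes sense when $1 - rs \neq 0$); the finiteness of $\|\alpha\|_{s,D}$ for $s < s_r$ then follows from the discrepancy computation.
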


\begin{cor}
    Let $(\X, \D)$ and $(\X', \D')$ be smooth projective klt pairs over $\O$ of relative dimension $n$, $r$ a positive integer. Let $L$ (resp.~$L'$) be an $\R$-divisor on $X$ (resp.~$X'$), and let $V$ (resp.~$V'$) be a linear subspace of $V_{r, (X, D), L}$ (resp.~$V_{r, (X', D'), L'}$). 
    
    Suppose that $q \ge \max\{q_0(\X), q_0(\X')\}$, the maps $\Phi_{V}$ and $\Phi_{V'}$ maps $X$ and $X'$ birationally to their images, and there is an isometry 
    \[T\colon \bigl(V', \|{-}\|_{s, \D'}\bigr) \longrightarrow \bigl(V, \|{-}\|_{s, \D}\bigr) \]
    for some positive number $s < \min \{s_r(\X, \D, V), s_r(\X', \D', V')\}$. Then there is a birational map $f\colon \X \dashrightarrow \X'$ such that $T = u\cdot f^*$ for some $u \in \K^\times$.  
\end{cor}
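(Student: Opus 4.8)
The plan is to transcribe the proof of Corollary~\ref{corbir}, with $V_{r,X}$, $V_{r,X'}$ and $\|{-}\|_s$ replaced by the linear subspaces $V$, $V'$ and the log norms $\|{-}\|_{s,D}$, $\|{-}\|_{s,D'}$, and with the log-pair version of Theorem~\ref{thrbir} (the theorem stated immediately above) used in place of Theorem~\ref{thrbir} itself. First, the bound $q\ge\max\{q_0(\X),q_0(\X')\}$ together with Remark~\ref{rmkWeilbound} forces $\#X_0(\F_{\!q})>0$ and $\#X_0'(\F_{\!q})>0$; hence $X(\textbf{K})$ and $X'(\textbf{K})$ each contain an $n$-dimensional polydisc and are of positive measure, both as $\textbf{K}$-analytic manifolds and with respect to the canonical measures $d\mu_{\X,D}$ and $d\mu_{\X',D'}$ (which are locally $|\omega|^{1/r_D}$ for a nonvanishing generator $\omega$, hence strictly positive on any polydisc meeting the locus where $\omega\neq 0$). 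The hypothesis $s<\min\{s_r(X,D,V),s_r(X',D',V')\}$ is precisely what makes $\|{-}\|_{s,D}$ finite on $V$ and $\|{-}\|_{s,D'}$ finite on $V'$, so that $T$ is an isometry of genuine normed spaces and both inputs required by the log-pair theorem are in place.

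Second, applying that theorem in the smooth-over-$\O$ case (which is ours, so that $\|{-}\|_{1/r,D}$ may be replaced by $\|{-}\|_{s,D}$ for our $s$) yields that the images of $\Phi_{V}$ and $\Phi_{V'}$ are birational; reading off its proof, these images in fact share a Zariski dense open subset under the identification $\P(T)^\vee\colon\P(V)^\vee\xrightarrow{\ \sim\ }\P(V')^\vee$. Since by hypothesis $\Phi_{V}$ and $\Phi_{V'}$ map $X$ and $X'$ birationally onto their images, the composite $f\colon X\dashrightarrow X'$ given by $\Phi_{V'}^{-1}\circ\P(T)^\vee\circ\Phi_{V}$ is a birational map, which induces a birational map $f\colon\X\dashrightarrow\X'$ of the $\O$-models. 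Finally, from $\Phi_{V'}\circ f=\P(T)^\vee\circ\Phi_{V}$ we get $f^*\Phi_{V'}^*\mathcal{O}(1)=\Phi_{V}^*(\P(T)^\vee)^*\mathcal{O}(1)=\Phi_{V}^*\mathcal{O}(1)$, and since the projective linear isomorphism $\P(T)^\vee$ acts on $\mathcal{O}(1)$-sections as $T$ up to a scalar, $f^*$ carries $V'$ isomorphically onto $V$ and agrees with $T$ up to a factor $u\in\textbf{K}^\times$; that is, $T=u\cdot f^*$.

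No genuinely new difficulty arises here: the content is entirely that of Theorem~\ref{thrbir}, Corollary~\ref{corbir} and their log-pair upgrades, and the steps above are bookkeeping. The points deserving a little care are: (i) checking, as indicated, that positivity of $\#X_0(\F_{\!q})$ and $\#X_0'(\F_{\!q})$ really does yield positive $d\mu_{\X,D}$- and $d\mu_{\X',D'}$-measure, i.e.\ that these canonical measures do not degenerate on a polydisc; (ii) first replacing $L$ and $L'$ by the $\R$-divisors for which $r(K_{\Xg}+D+L)$ and $r(K_{\Xg'}+D'+L')$ are Cartier, as in Section~\ref{secgI}, which changes neither $V$, $V'$, nor $\Phi_{V}$, $\Phi_{V'}$, nor the norms, so that the hyperplane-bundle computation above applies to honest line bundles; and (iii) keeping in mind that $f^*$ in the conclusion denotes the isomorphism $V'\to V$ coming from the projective identification $\P(T)^\vee$, which need not be literal pullback of pluri-log-canonical forms, since $f$ is only required to match the linear systems $|V|$ and $|V'|$, not the boundary divisors $D$, $D'$ or $L$, $L'$ themselves.
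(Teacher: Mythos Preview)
Your proposal is correct and follows exactly the approach the paper intends: the paper itself provides no separate proof of this corollary, simply remarking that Theorem~\ref{thrbir} and Corollary~\ref{corbir} ``are also generalized to the log pair case (with only difference in notation)'', and your write-up is precisely that transcription. Your additional remarks (i)--(iii) on positivity of $d\mu_{\X,D}$, on normalizing $L$ so that $r(K_X+D+L)$ is Cartier, and on the meaning of $f^*$ as the linear-system identification induced by $\P(T)^\vee$ are all apt clarifications that the paper leaves implicit.
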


\section{Characterizing the $K$-equivalence}\label{secKequiv}

We say two projective smooth varieties $\X$ and $\X'$ over $\O$ (resp.~$\Xg$ and $\Xg'$ over $\K$) are $K$-equivalent on the $\K$-points if
there is a projective smooth variety $\Y$ over $\O$ (resp.~$Y$ over $\K$) with birational morphisms $\phi\colon \Y \to \X$ and $\phi'\colon \Y \to \X'$ (resp.~$\phi\colon Y\to X$ and $\phi'\colon Y\to X'$) such that $\phi^*K_X = {\phi'}^*K_{X'}$ on $Y(\K)$. Equivalently, 
\[K_Y = \phi^*K_X + \sum m_E E = {\phi'}^*K_{X'} + \sum m_E E \]
on $Y(\K)$. For $K$-equivalence between klt pairs $(\X, \D)$ and $(\X', \D')$, simply replace the relation $\phi^*K_X = {\phi'}^*K_{X'}$ by $\phi^*(K_X + D) = {\phi'}^*(K_{X'} + D')$. 

When $\X$ and $\X'$ are $K$-equivalent on the $\K$-points over $\O$, the pullbacks of the canonical measures $\phi^*d\mu_{\X}$ and ${\phi'}^*d\mu_{\X'}$ are equal to each other. Hence, the natural linear transformation 
\begin{equation}\label{eqisoKequiv} 
    \bigl(V_{r, X'}, \|{-}\|_s\bigr)\longrightarrow \bigl(V_{r, X}, \|{-}\|_s\bigr)
\end{equation}
is an isometry:
\[ \|\alpha'\|_s = \int_{Y(\K)} |{\phi'}^*\alpha'|^s\cdot {\phi'}^*d\mu_{\X'}^{1-rs} = \int_{Y(\K)} |\phi^*\alpha|^s\cdot \phi^*d\mu_{\X}^{1-rs} = \|\alpha\|_s. \]
The main result of this section is to show that the isometry (\ref{eqisoKequiv}) gives us $K$-equivalence on the $\K$-points between $\Xg$ and $\Xg'$ when $\Xg$ and $\Xg'$ satisfy (\hyperlink{assumption}{$\spadesuit$}). 

In order to prove this, let us consider a proper birational morphism $\phi\colon Y\to X$ over $\K$ such that 
\[K_Y = \phi^* K_X + \sum_{E\in \mathcal{E}} m_E E \]
on $Y(\K)$. Suppose that the $r$-canonical maps
\[\begin{tikzcd}
    Y \ar[d, "\phi"] \ar[r, dashed, "\Phi_{r, Y}"] & \P\bigl(H^0(Y, rK_Y)\bigr)^\vee \ar[d, "\P(\phi^*)^\vee", "\wr"']\\
    X  \ar[r, dashed, "\Phi_{r, X}"] & \P\bigl(H^0(\Xg, rK_{\Xg})\bigr)^\vee. 
\end{tikzcd}\]
map $X$ and $Y$ birationally to their images. 

For an $r$-form $\alpha \in V_{r, X}$, its pullback $\phi^*\alpha$ lies in $V_{r, Y}$. It is then natural to compare the difference between $\|\alpha\|_s$ and $\|\phi^*\alpha\|_{s}$. But there might be a problem: $Y$ may not be defined over $\O$, so there may not be a canonical measure $\mu_{\Y}$ on $Y(\K)$. Instead, we construct a measure $\mu_Y$ temporary as follows: 
\begin{itemize}
    \item[(\hypertarget{construction}{$\dagger$})] decompose $Y(\K)$ into finitely many polydiscs $Y_j \cong \pi^{k_j} \O^n$, and take $\mu_Y|_{Y_j}$ to be the canonical measure $\mu_\O$ on $\pi^{k_j}\O^n$. Using this measure $\mu_Y$, we define the $s$-norm $\|{-}\|_s$ on $V_{r, Y}$ similarly: 
    \[\|\beta\|_s = \int_{Y(\K)} |\beta|^s \,d\mu_Y^{1-rs}. \]
\end{itemize}

\begin{thr}\label{thrgI}
    For a fixed positive number $s \neq \frac{1}{r}$, the Jacobian $J_\phi\colon Y(\K) \to \R_{\ge 0}$ defined by the formula $\phi^*d\mu_\X = J_\phi\,d\mu_Y$ is determined by the data
    \begin{equation}\label{eqdata}
        \bigl(V_{r, X}, \|{-}\|_{s}\bigr)\quad\text{and}\quad \bigl(V_{r, Y}, \|{-}\|_{s}\bigr). 
    \end{equation}
    In particular, the set of divisors $\mathcal{E} = \{E\}$ and the positive integers $m_E$, $E\in \mathcal{E}$, are also determined by (\ref{eqdata}). 
\end{thr}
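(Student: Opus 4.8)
The plan is to extract $J_\phi$ from the two normed spaces via the change-of-variables identity for $\phi$ together with the ``Rudin-type'' rigidity established in the proof of Theorem~\ref{thrbir}. The starting observation is that, for every $\alpha\in V_{r,X}$, the defining formula $\phi^*d\mu_{\X}=J_\phi\,d\mu_Y$ gives
\[\|\alpha\|_s=\int_{\Yg(\textbf{K})}|\phi^*\alpha|^s\,(\phi^*d\mu_{\X})^{1-rs}=\int_{\Yg(\textbf{K})}|\phi^*\alpha|^s\,J_\phi^{\,1-rs}\,d\mu_Y^{1-rs},\]
whereas $\|\phi^*\alpha\|_s=\int_{\Yg(\textbf{K})}|\phi^*\alpha|^s\,d\mu_Y^{1-rs}$. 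So the pullback map $\phi^*\colon(V_{r,X},\|{-}\|_s)\to(V_{r,Y},\|{-}\|_s)$ is \emph{not} an isometry in general; instead it intertwines $\|{-}\|_s$ with the ``twisted'' norm $\alpha\mapsto\int|\phi^*\alpha|^s\,J_\phi^{1-rs}\,d\mu_Y^{1-rs}$ on its image. The point is that $\phi^*(V_{r,X})\subseteq V_{r,Y}$ is a \emph{known} subspace once the two normed spaces are given (it is cut out intrinsically as the subspace on which the two norms satisfy the compatibility dictated by the $r$-canonical maps $\Phi_{r,X}$, $\Phi_{r,Y}$, which are themselves recovered from the norms as in the proof of Theorem~\ref{thrbir}), so both $\|\phi^*\alpha\|_s$ and $\|\alpha\|_s$ are available as functions of $\alpha$ in that subspace.

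Next I would run the Fubini/cut-off argument from the proof of Theorem~\ref{thrbir} in this twisted setting. Fix a basis $\alpha_0,\ldots,\alpha_N$ of $V_{r,X}$, put $\beta_i=\phi^*\alpha_i$, $d\nu=|\beta_0|^s\,d\mu_Y^{1-rs}$, $d\tilde\nu=|\beta_0|^s\,J_\phi^{1-rs}\,d\mu_Y^{1-rs}$, and $g_i=\beta_i/\beta_0$ on $\Yg^\circ=\Yg\setminus\{\beta_0=0\}$. For all $\lambda\in\textbf{K}^N$ the identity above reads
\[\int_{\Yg^\circ(\textbf{K})}\Bigl|1+\sum_i\lambda_i g_i\Bigr|^s\,d\tilde\nu=\Bigl\|\textstyle\sum_i\lambda_i\alpha_i+\alpha_0\Bigr\|_s,\]
which is a quantity determined by data~(\ref{eqdata}); likewise $\int_{\Yg^\circ(\textbf{K})}|1+\sum_i\lambda_i g_i|^s\,d\nu=\|\sum_i\lambda_i\beta_i+\beta_0\|_s$ is determined. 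Integrating against the kernel $d\lambda$ over $\O^N$ and forming the cut-off function $B_0(s,-)$ exactly as in the claim inside the proof of Theorem~\ref{thrbir}, one deduces that for every nonnegative Borel $h\colon\textbf{K}^N\to\R$ the pushforwards $G_*(\tilde\nu)$ and $G_*(\nu)$ — where $G=(g_1,\ldots,g_N)$ — are \emph{both} computable from~(\ref{eqdata}) as measures on $\A^N$; here I use that $G$ is birational onto its image (the hypothesis that $\Phi_{r,Y}$ is birational onto its image), so that $G$ is injective off a lower-dimensional set and hence $\nu$, $\tilde\nu$ are recovered on $\Yg^\circ(\textbf{K})$ from their pushforwards. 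Since $\nu$ and $\tilde\nu$ are both now known and $\nu$ is (locally) equivalent to Lebesgue/Haar measure with nowhere-vanishing density, the Radon–Nikodym derivative $d\tilde\nu/d\nu=J_\phi^{\,1-rs}$ is determined $\mu_Y$-a.e.\ on $\Yg^\circ(\textbf{K})$; as $s\neq1/r$ we extract $J_\phi$ by taking the $(1-rs)$-th root. Running this over a cover of $\Yg(\textbf{K})$ by such charts $\Yg\setminus\{\beta_0=0\}$ (for $\beta_0$ ranging over a basis, whose common zero locus is lower-dimensional) pins down $J_\phi$ everywhere.

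For the last sentence: on a log resolution chart where $K_Y=\phi^*K_X+\sum m_E E$, the canonical measures compare as $\phi^*d\mu_{\X}=\prod_E|u_E|^{m_E}\cdot(\text{unit})\cdot d\mu_Y$ in local coordinates adapted to the normal-crossing divisor $\sum E$, so $J_\phi=\prod_E|u_E|^{m_E}$ up to a nowhere-vanishing locally analytic factor; knowing the function $J_\phi$ therefore recovers its zero locus with multiplicities, i.e.\ the divisor $\sum_E m_E E$ on $\Yg(\textbf{K})$, and since $\Yg(\textbf{K})$ is Zariski dense this recovers $\mathcal{E}=\{E\}$ and the integers $m_E$. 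The main obstacle I anticipate is the step where $\nu$ and $\tilde\nu$ are recovered from their $G$-pushforwards: this requires care because $G$ collapses the (at most $(n-\bar n)$-dimensional) fibers of $\Phi_{r,Y}$, so strictly speaking one recovers $G_*\nu$ and $G_*\tilde\nu$, and to descend back to $\Yg^\circ(\textbf{K})$ one must use that $J_\phi$ is constant along those fibers on the generic locus — equivalently, that $J_\phi$, being a ratio of pullbacks of top-forms from $X$, is itself (generically) pulled back from $\bar X$ — together with the birationality of $\Phi_{r,Y}$ onto its image to make the identification unambiguous; handling the boundary where $G$ is not defined or not injective, and checking measure-zero exceptional sets do not interfere, is the delicate part of the argument.
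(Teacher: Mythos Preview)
Your proposal is correct and follows essentially the same route as the paper: set up $d\nu=|\phi^*\alpha_0|^s\,d\mu_Y^{1-rs}$ and its $J_\phi^{1-rs}$-twist, run the Fubini/cut-off argument from Theorem~\ref{thrbir} to determine $I_X(h)$ and $I_Y(h)$ for all Borel $h$, and then read off $J_\phi$ as a $(1-rs)$-th root of a ratio. The paper writes this last step as the pointwise limit
\[
J_\phi(y)=\Bigl(\lim_{\varepsilon\to 0^+}\tfrac{I_X(\boldsymbol{1}_{B_\varepsilon(G(y))})}{I_Y(\boldsymbol{1}_{B_\varepsilon(G(y))})}\Bigr)^{1/(1-rs)}
\]
on a Zariski open $U$ where $G$ is an immersion with $G^{-1}(G(y))=\{y\}$, then extends by continuity of $J_\phi$; this is exactly your Radon--Nikodym step phrased concretely, and it makes your final worry evaporate: under the standing hypothesis that $\Phi_{r,Y}$ maps $Y$ birationally to its image one has $\bar n=n$, so the generic fiber of $G$ is a single point and there is nothing to ``descend'' along. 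Your remark that $\phi^*(V_{r,X})$ must be identified inside $V_{r,Y}$ is also not an issue here---the map $\phi$ (hence $\phi^*$) is part of the given data, and the paper simply uses it. For the coda, the paper extracts $m_E$ a bit more explicitly than you do: near a generic point of $E$ it observes that the value set $\{j:J_\phi=q^{-j}\text{ somewhere nearby}\}$ is an arithmetic progression with common difference $m_E$.
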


\begin{proof}
    It follows from the construction (\hyperlink{construction}{$\dagger$}) of $\mu_Y$ that $J_\phi\colon Y(\K) \to \R_{\ge 0}$ is continuous. Let $\alpha_0$, $\ldots$ , $\alpha_N$ be a basis of $V_{r, X}$, $g_i = \frac{\alpha_i}{\alpha_0}$ a rational function on $Y$ for each $i$, and $d\nu = |\phi^*\alpha_0|^s\,d\mu_Y^{1 - rs}$. Then for a form $\alpha = \alpha_0 + \sum_i \lambda_i g_i$, we have 
    \[\|\alpha\|_s = \int_{Y^\circ(\K)} J_\phi^{1-rs} \left|1 + \sum \lambda_i g_i\right|^s d\nu,\quad \|\phi^*\alpha\|_s = \int_{Y^\circ(\K)} \left|1 + \sum \lambda_i g_i\right|^s d\nu, \]
    where $Y^\circ = Y\setminus \{\phi^*\alpha_0 = 0\}$. 

    For each Borel function $h\colon \K^N \to \R$, denote by $I(h) = (I_X(h), I_Y(h))$ the integrals given by 
    \[I_X(h) = \int_{Y^\circ(\K)} J_\phi^{1-rs}\cdot(h \circ G)\,d\nu,\quad I_Y(h) = \int_{Y^\circ(\K)} (h \circ G)\,d\nu, \]
    where $G = (g_1, \ldots, g_N) \colon Y^\circ \to \textbf{A}^{\!N}$. 
    
    As in the proof of Theorem~\ref{thrbir}, consider the functions $B_s(y)$ and $B_{s,0}(y)$ (defined in (\ref{eqBs}) and (\ref{eqBs0})). Let $A = \alpha_0 + \sum \O\alpha_i$. It follows that
    \[I(B_s) = \bigl(I_X(B_s), I_Y(B_s)\bigr) = \left(\int_{A} \|\alpha\|_s \,d\alpha,\int_{A} \|\phi^*\alpha\|_s \,d\alpha\right), \]
    and hence $I(B_{s,0})$, is determined by (\ref{eqdata}). Using (\ref{eqcharON}), we see that $I(\boldsymbol{1}_{\O^N})$, and hence $I(h)$ for each nonnegative Borel function $h\colon \K^{N} \to \R$, is determined by (\ref{eqdata}). 
    
    Let $U$ be a Zariski open dense subset of $Y^\circ$ on which $G|_U\colon U \to \A^{\!N}$ is an immersion and $G^{-1}(G(U)) = U$. Then for each $y\in U(\K)$, 
    \[J_\phi(y) = \left(\lim_{\varepsilon \to 0^+} \frac{I_X\bigl(\boldsymbol{1}_{B_\varepsilon(G(y))}\bigr)}{I_Y\bigl(\boldsymbol{1}_{B_\varepsilon(G(y))}\bigr)}\right)^{\frac{1}{1-rs}} \]
    is determined by (\ref{eqdata}) (note that $s \neq 1/r$). By the continuity of $J_\phi$, we can then determine the function $J_\phi\colon Y \to \R_{\ge 0}$. 
    
    Thus, the union of the divisors
    \[\bigcup_{E\in \mathcal{E}}E = \bigl\{y\in Y\,\big|\, J_\phi(y) = 0\bigr\} \]
    is determined by (\ref{eqdata}). 
    
    In order to find the order $m_E$ of $E$, we pick a generic point $y\in E$ that does not lies in any other $E'\in \mathcal{E}$. Say $y$ lies in the polydisc $Y_j \cong \pi^{k_j}\O^n$. Then under some coordinate $u = (u_1, \ldots, u_n)$, 
    \[\phi^*d\mu_X = J_\phi(u)\,d\mu_Y = m(u)\cdot |u_1|^{m_E}\,du, \]
    for some nonvanishing continuous function $m(u)$. We may assume that $m(u) \equiv q^{-m}$ is a constant on some polydisc $\Delta$ that contains $y$. Then for $j$ large, 
    \[\left\{u\in \Delta\,\middle|\, J_\phi(u) = q^{-j} \right\} \neq \varnothing \quad \iff \quad j \in m_E \Z + m. \]
    Therefore, $m_E$ can be determined by $J_\phi$, and hence, by (\ref{eqdata}). 
\end{proof}

\begin{cor}\label{corgI}
    Let $\X$ and $\X'$ be smooth projective varieties over $\O$ of relative dimension $n$ such that 
    \[T\colon \bigl(V_{r, X'}, \|{-}\|_s\bigr) \longrightarrow \bigl(V_{r, X}, \|{-}\|_s\bigr) \]
    is an isometry for some positive number $s\neq 1/r$. Suppose that the $r$-canonical maps $\Phi_{r, X}$ and $\Phi_{r, X'}$ maps $X$ and $X'$ birationally to their images. Then there exists a projective smooth variety $Y$ over $\K$ with birational morphisms $\phi\colon Y \to X$ and $\phi'\colon Y \to X'$ such that $\phi^*d\mu_{X}$ is proportional to ${\phi'}^*d\mu_{X'}$. In particular, $\Xg$ and $\Xg'$ are $K$-equivalent on the $\K$-points. 
\end{cor}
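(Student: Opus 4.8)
The plan is to combine Theorem~\ref{thrbir} (or rather Corollary~\ref{corbir}) with Theorem~\ref{thrgI} applied to a common resolution. First, since the $r$-canonical maps $\Phi_{r,X}$ and $\Phi_{r,X'}$ map $X$ and $X'$ birationally to their images and there is an isometry $T\colon(V_{r,X'},\|{-}\|_s)\to(V_{r,X},\|{-}\|_s)$, Theorem~\ref{thrbir} (valid for $\|{-}\|_s$ since $\X$, $\X'$ are smooth over $\O$) gives that the images of $\Phi_{r,X}$ and $\Phi_{r,X'}$ are birational to each other; composing with the birational maps $X\dashrightarrow\Phi_{r,X}(X)$ and $X'\dashrightarrow\Phi_{r,X'}(X')$ produces a birational map $f\colon X\dashrightarrow X'$, and as in Corollary~\ref{corbir} one checks $T=u\cdot f^*$ for some $u\in\textbf{K}^\times$. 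By Hironaka resolution over $\textbf{K}$, choose a smooth projective $Y$ over $\textbf{K}$ with birational morphisms $\phi\colon Y\to X$ and $\phi'\colon Y\to X'$ resolving $f$, so that $\phi'=f\circ\phi$ as rational maps.

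Next I would arrange the hypotheses of Theorem~\ref{thrgI}. Construct a temporary measure $\mu_Y$ on $Y(\textbf{K})$ as in (\hyperlink{construction}{$\dagger$}), define $\|{-}\|_s$ on $V_{r,Y}$ accordingly, and note that the $r$-canonical map $\Phi_{r,Y}$ factors through $\P(\phi^*)^\vee\circ\Phi_{r,X}$, hence maps $Y$ birationally to its image (after possibly blowing up further, which does not change the conclusion since extra exceptional divisors contribute the same discrepancies on both sides). Then Theorem~\ref{thrgI}, applied once to $\phi\colon Y\to X$ and once to $\phi'\colon Y\to X'$, shows that the Jacobians $J_\phi$ (defined by $\phi^*d\mu_{\X}=J_\phi\,d\mu_Y$) and $J_{\phi'}$ (defined by ${\phi'}^*d\mu_{\X'}=J_{\phi'}\,d\mu_Y$) are each determined by the normed-space data $(V_{r,X},\|{-}\|_s)$, $(V_{r,Y},\|{-}\|_s)$ and $(V_{r,X'},\|{-}\|_s)$, $(V_{r,Y},\|{-}\|_s)$ respectively. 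Crucially, the isometry $T=u\cdot f^*$ identifies $(V_{r,X'},\|{-}\|_s)$ with $(V_{r,X},\|{-}\|_s)$ compatibly with pullback to $Y$: under the basis $\alpha_0,\dots,\alpha_N$ of $V_{r,X}$ and $\alpha_i'=T^{-1}(\alpha_i)=u^{-1}f^*\alpha_i$ of $V_{r,X'}$, the rational functions $g_i=\alpha_i/\alpha_0$ on $Y$ and $g_i'=\alpha_i'/\alpha_0'$ on $Y$ coincide (the factor $u^{-1}$ cancels in the ratio). Tracing through the integral identities $I(h)$ in the proof of Theorem~\ref{thrgI}, this forces $J_\phi$ and $J_{\phi'}$ to agree up to the constant $|u|^{-rs/(1-rs)}$ coming from $\|\alpha_0'\|_s=\|\alpha_0\|_s$ versus $\|\phi^*\alpha_0\|_s$ versus $\|{\phi'}^*\alpha_0'\|_s$; that is, $J_{\phi'}=c\cdot J_\phi$ on $Y(\textbf{K})$ for some positive constant $c$.

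From $J_{\phi'}=c\cdot J_\phi$ we conclude $\phi^*d\mu_{\X}$ is proportional to ${\phi'}^*d\mu_{\X'}$ on $Y(\textbf{K})$. Since $d\mu_{\X}$ and $d\mu_{\X'}$ are, locally on charts where $rK$ is free, the $p$-adic norms $|\omega|^{1/r}$ of local generators $\omega$ of $rK_{\X}$ (resp.~$rK_{\X'}$), proportionality of the pullbacks means $|\phi^*\omega|^{1/r}=\text{const}\cdot|{\phi'}^*\omega'|^{1/r}$, i.e.\ the divisor $\phi^*K_{\X}-{\phi'}^*K_{\X'}$ has trivial associated valuation at every $\textbf{K}$-point of $Y$, so $\phi^*K_{\X}={\phi'}^*K_{\X'}$ on $Y(\textbf{K})$; equivalently $K_Y=\phi^*K_X+\sum m_E E={\phi'}^*K_{X'}+\sum m_E E$ there, which is exactly $K$-equivalence on the $\textbf{K}$-points of $X$ and $X'$. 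The main obstacle I anticipate is the bookkeeping in the middle step: ensuring that $\Phi_{r,Y}$ genuinely maps $Y$ birationally to its image after resolving $f$ (one may need to take $Y$ to dominate a resolution of the graph of $f$ and argue that adding further smooth blow-ups neither destroys this property nor changes the proportionality constant), and carefully matching the base points $\alpha_0$ and $\alpha_0'=u^{-1}f^*\alpha_0$ so that the two applications of Theorem~\ref{thrgI} use literally the same $G=(g_1,\dots,g_N)$ on $Y^\circ$; once that identification is in place, the equality of Jacobians up to a constant is immediate from the determinacy statement in Theorem~\ref{thrgI}.
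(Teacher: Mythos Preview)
Your proposal is correct and follows the same route as the paper: deduce the birational map $f$ with $T=u\cdot f^*$ from Theorem~\ref{thrbir}/Corollary~\ref{corbir}, pass to a common smooth resolution $Y$ equipped with the ad hoc measure $\mu_Y$ of (\hyperlink{construction}{$\dagger$}), and apply Theorem~\ref{thrgI} to both $\phi$ and $\phi'$ to conclude that $J_\phi$ and $J_{\phi'}$ are proportional, hence $\phi^*K_X={\phi'}^*K_{X'}$ on $Y(\textbf{K})$. Two cosmetic points worth cleaning up: the formula $\alpha_i'=u^{-1}f^*\alpha_i$ does not type-check (since $f^*\colon V_{r,X'}\to V_{r,X}$; you mean $f^*\alpha_i'=u^{-1}\alpha_i$), and the proportionality constant is not quite $|u|^{-rs/(1-rs)}$ (the paper records $|u|^{1/(1-rs)}$), but neither affects the argument since only proportionality is needed.
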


\begin{proof}
    It follows from Theorem~\ref{thrbir} that $X$ and $X'$ are birational. So there is a resolution $\phi\colon Y \to X$, such that the birational map $f\colon X \dashrightarrow X'$ factors through some birational morphism $\phi'\colon Y \to X'$: 
    \[\begin{tikzcd}[column sep = tiny]
        & Y \ar[ld, "\phi"'] \ar[rd, "\phi'"] \\
        X \ar[rr, dashed, "f"] && X' 
    \end{tikzcd}\]
    Note that the $r$-canonical map $\Phi_{r, Y}$ also maps $Y$ birationally to its image. 
    
    Write
    \[\phi^*K_{X} = K_Y + \sum m_E E,\quad {\phi'}^* K_{X'} = K_Y + \sum m_E' E. \]
    Let $\mu_Y$ be the measure on $Y(\K)$ we constructed in (\hyperlink{construction}{$\dagger$}), and let $\|{-}\|_s$ be the $s$-norm induced by $\mu_Y$. By Theorem~\ref{thrgI}, the Jacobian $J_\phi = \phi^*d\mu_\X / d\mu_Y$ and $\{(E, m_E)\}_{E\in \mathcal{E}}$ are determined by 
    \[\bigl(V_{r, X}, \|{-}\|_s\bigr)\quad\text{and}\quad \bigl(V_{r, Y}, \|{-}\|_{s}\bigr), \]
    while $J_{\phi'}  = {\phi'}^*d\mu_{\X'} / d\mu_Y$ and $\{(E, m_E')\}_{E\in \mathcal{E}}$ are determined by 
    \[\bigl(V_{r, X'}, \|{-}\|_s\bigr)\quad\text{and}\quad \bigl(V_{r, Y}, \|{-}\|_{s}\bigr). \]
    Since $T = u\cdot f^*$ for some $u \in \K^\times$, we see that 
    \[{\phi'}^*d\mu_{\X'} = J_{\phi'}\,d\mu_{Y} = {|u|}^{\frac{1}{1-rs}}J_\phi\,d\mu_{Y} = {|u|}^{\frac{1}{1-rs}}\,\phi^*d\mu_{\X} \]
    and $\{(E, m_E)\}_{E\in \mathcal{E}} = \{(E, m_E')\}_{E\in \mathcal{E}}$, which shows that $\Xg$ is $K$-equivalent to $\Xg'$ on the $\K$-points. 
\end{proof}

\begin{rmk}\hfill
    \begin{enumerate}[(i)]
        \item In the proof above, if we assume further more that there are resolutions $\phi\colon \Y\to \X$ and $\phi'\colon \Y \to \X'$ over $\O$, then this shows that $\X$ and $\X'$ are $K$-equivalent on the $\K$-points (over $\O$). 
        
        \item For other positive number $s'$, using ${\phi'}^*d\mu_{\X'} = \upsilon\,\phi^*d\mu_{\X}$ for some $\upsilon \in \R_{>0}$, we see that
        \[\|f^*\alpha\|_{s'} = \int \|\phi^*\alpha\|^{s'}J_{\phi'}^{1-rs'}\,d\mu_Y^{1-rs'} = \upsilon^{1-rs'}\int \|\phi^*\alpha\|^{s'}J_{\phi}^{1-rs'}\,d\mu_Y^{1-rs'} = \upsilon^{1-rs'}\|\alpha\|_{s'}, \]
        i.e., under $f^*$, $\|{-}\|_{s'}$ on $V_{r, X'}$ is proportional to $\|{-}\|_{s'}$ on $V_{r, X}$. 
    
        \item When $X$ and $X'$ are already birational to each other, say $f\colon X \dashrightarrow X'$ is a birational map, the order on the $s$-norms also detects the $K$-partial ordering. More precisely, if $s < 1/r$ and 
        \begin{equation}\label{eqnormorder}
            \|f^*\alpha'\|_s \le \|\alpha'\|_s,\quad \forall \alpha' \in H^0(\Xg', rK_{\Xg'}),
        \end{equation}
        then $X \le_{K} X'$, i.e., there exists a birational correspondence (over $\K$)
        \[\begin{tikzcd}[column sep = tiny]
            & Y \ar[ld, "\phi"'] \ar[rd, "\phi'"] \\
            X \ar[rr, dashed] && X' 
        \end{tikzcd}\]
        with $\phi^* K_X \le {\phi'}^* K_{X'}$ on $Y(\K)$. If $s > 1/r$, then the condition (\ref{eqnormorder}) implies $X \ge_{K} X'$. 
        \item The result also applies to the case in which $r$ is a negative integer, for example, when both $X$ and $X'$ are Fano with $-r$ sufficiently large. 
    \end{enumerate}
\end{rmk}

Again, the theorem and the corollary above are also generalized to the log pair case. 

\begin{cor}
    Let $(\X, \D)$ and $(\X', \D')$ be smooth projective klt pairs over $\O$ of relative dimension $n$. Let $L$ (resp.~$L'$) be an $\R$-divisor on $X$ (resp.~$X'$), and let $V$ (resp.~$V'$) be a linear subspace of $V_{r, (X, D), L}$ (resp.~$V_{r, (X', D'), L'}$). 
    
    Suppose that the maps $\Phi_V$ and $\Phi_{V'}$ maps $X$ and $X'$ birationally to their images, and there is an isometry \[T\colon \bigl(V', \|{-}\|_{s, \D'}\bigr) \longrightarrow \bigl(V, \|{-}\|_{s, \D}\bigr) \]
    for some positive number $s < \min \{s_r(X, D, V), s_r(X', D', V')\}$ with $s\neq 1/r$. Then $(X, D)$ and $(X', D')$ are $K$-equivalent on the $\K$-points. 
\end{cor}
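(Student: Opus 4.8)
The plan is to transcribe the proofs of Theorem~\ref{thrgI} and Corollary~\ref{corgI} into the klt setting, replacing $H^0(\Xg, rK_{\Xg})$ by $V$, the canonical measure $d\mu_{\X}$ by $d\mu_{\X, D}$, and the norm $\|{-}\|_s$ by $\|{-}\|_{s, D}$; the bound $s < \min\{s_r(X, D, V), s_r(X', D', V')\}$ is exactly what makes every integral that occurs below finite. First I would invoke the log-pair analogues of Theorem~\ref{thrbir} and Corollary~\ref{corbir} stated above: since $\Phi_V$ and $\Phi_{V'}$ map $X$ and $X'$ birationally onto their images and those images are mutually birational, $X$ and $X'$ are birational, and there is a birational map $f\colon X \dashrightarrow X'$ with $T = u\cdot f^*$ for some $u \in \textbf{K}^\times$. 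Next I would pass to a common resolution: a smooth projective $Y$ over $\textbf{K}$ with birational morphisms $\phi\colon Y \to X$ and $\phi'\colon Y \to X'$ such that $\phi' = f\circ\phi$ as rational maps, chosen so that $Y$ simultaneously log-resolves $D$, $D'$, $L$, $L'$ and the base loci of $|V|$ and $|V'|$. Equip $Y(\textbf{K})$ with a reference measure $\mu_Y$ as in construction~(\hyperlink{construction}{$\dagger$}), and write $K_Y = \phi^*(K_X + D) + \sum_E a_E E$, so that $J_\phi \colonequals \phi^*d\mu_{\X, D}/d\mu_Y$ equals $m(u)\,|u_1|^{a_E}$ near a generic point of a component $E$ (with $u_1$ a local equation of $E$ and $m$ nonvanishing continuous), where $a_E > -1$ by kltness; thus $J_\phi$ is locally integrable, though it may be unbounded along the $E$ with $a_E < 0$.

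The heart of the argument is the klt version of Theorem~\ref{thrgI}: the function $J_\phi$ together with the collection $\{(E, a_E)\}$ is determined by the normed spaces $\bigl(V, \|{-}\|_{s, D}\bigr)$ and $\bigl(\phi^*V, \|{-}\|_s\bigr)$ (the latter with the $s$-norm attached to $\mu_Y$) together with the pullback map $\phi^*$ between them. Its proof is that of Theorem~\ref{thrgI} with no essential change: for a basis $\alpha_0, \ldots, \alpha_N$ of $V$, putting $g_i = \alpha_i/\alpha_0$, $d\nu = |\phi^*\alpha_0|^s\,d\mu_Y^{1-rs}$ and $Y^\circ = Y\setminus\{\phi^*\alpha_0 = 0\}$, one has $\|\alpha\|_{s, D} = \int_{Y^\circ(\textbf{K})} J_\phi^{1-rs}\,\bigl|1 + \sum\lambda_i g_i\bigr|^s\,d\nu$ and $\|\phi^*\alpha\|_s = \int_{Y^\circ(\textbf{K})}\bigl|1 + \sum\lambda_i g_i\bigr|^s\,d\nu$ for $\alpha = \alpha_0 + \sum\lambda_i\alpha_i$; the cut-off functions $B(s, {-})$, $B_0(s, {-})$ and formula~(\ref{eqcharON}) show that the pair of integral functionals $(I_X(h), I_Y(h))$ is determined by this data for every nonnegative Borel $h$; specializing $h$ to indicators of shrinking balls about points of the dense open set where $G = (g_1, \ldots, g_N)$ is an embedding (nonempty since $\Phi_V$, and hence $\Phi_{\phi^*V}$, is birational onto its image) recovers $J_\phi$ there, using $s\neq 1/r$; and the local shape $m(u)\,|u_1|^{a_E}$ of $J_\phi$ near a generic point of $E$ then recovers $\{(E, a_E)\}$.

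To conclude, I would apply this klt Theorem~\ref{thrgI} to $\phi$ and to $\phi'$ simultaneously. Since $f^* = T/u$ is a $\textbf{K}$-linear isomorphism $V' \to V$ and $\phi' = f\circ\phi$, one checks that ${\phi'}^*V' = \phi^*V$ as spaces of rational pluri-forms on $Y$, and that transporting the data $\bigl(V', \|{-}\|_{s, D'}\bigr)$, $\bigl({\phi'}^*V', \|{-}\|_s\bigr)$, ${\phi'}^*$ through the isometry $T$ yields exactly $\bigl(V, \|{-}\|_{s, D}\bigr)$, $\bigl(\phi^*V, \|{-}\|_s\bigr)$, $\phi^*$ except that the pullback map gets rescaled by $u^{-1}$. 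Just as in the proof of Corollary~\ref{corgI}, this rescaling merely multiplies the recovered Jacobian by a fixed power of $|u|$, so ${\phi'}^*d\mu_{\X', D'}$ is proportional to $\phi^*d\mu_{\X, D}$ on $Y(\textbf{K})$ and $\{(E, a_E)\} = \{(E, a_E')\}$, where $K_Y = {\phi'}^*(K_{X'} + D') + \sum_E a_E' E$. The last equality says $\phi^*(K_X + D) = {\phi'}^*(K_{X'} + D')$ on $Y(\textbf{K})$, i.e.\ $(X, D)$ and $(X', D')$ are $K$-equivalent on the $\textbf{K}$-points.

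The one place the argument genuinely goes beyond the $D = 0$ case of Corollary~\ref{corgI} is the bookkeeping forced by the possibly negative discrepancies $a_E$: I expect the main obstacle to be verifying that the chosen $s < \min\{s_r(X, D, V), s_r(X', D', V')\}$ really makes both $\|\alpha\|_{s, D}$ and $\|\phi^*\alpha\|_s$ (measured against $\mu_Y$) finite, and that the pointwise recovery of $J_\phi$ — valid a priori only on the dense open set where $G$ is an embedding, which misses the divisors $E$ — extends correctly across those divisors through the local model $m(u)\,|u_1|^{a_E}$. The klt inequality $a_E > -1$ is precisely what keeps everything locally integrable, so no hypothesis beyond $s < \min\{s_r(X, D, V), s_r(X', D', V')\}$ is needed.
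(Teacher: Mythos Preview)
Your proposal is correct and follows precisely the route the paper intends: the paper does not give a separate proof of this corollary but simply states that Theorem~\ref{thrgI} and Corollary~\ref{corgI} ``are also generalized to the log pair case,'' and you have accurately spelled out that transcription, including the replacement of $d\mu_{\X}$ by $d\mu_{\X,D}$, the invocation of the log-pair analogue of Corollary~\ref{corbir} to produce $f$ with $T = u\cdot f^*$, and the recovery of the Jacobians $J_\phi$, $J_{\phi'}$ via the cut-off argument. Your attention to the possibly negative log discrepancies $a_E > -1$ (so that $J_\phi$ may blow up rather than vanish along some $E$, yet remains locally integrable) is a genuine refinement of the $D=0$ argument that the paper leaves implicit; it affects only the step identifying $\bigcup_E E$ as the locus where $J_\phi$ is either zero or infinite rather than merely zero, and the subsequent extraction of $a_E$ from the local model $m(u)\,|u_1|^{a_E}$ goes through unchanged.
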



\begin{rmk}
    Let $P(\partial_s) = \sum c_k \partial_s^k$ be a linear differential operator of degree $d \ge 1$ where $\partial_s = \tfrac{d}{ds}$. Then 
    \[P(\partial_s)\|\alpha\|_{s, D} = \int_{X(\K)} P(a)t^a\,d\mu_{\X, \D}, \]
    where $a = v(|\alpha| / d\mu_{\X, \D}^r)$. An isometry 
    \[\bigl(V', P(\partial_s)\|{-}\|_{s, \D'}\bigr) \longrightarrow \bigl(V, P(\partial_s)\|{-}\|_{s, \D}\bigr)\] 
    also gives us $K$-equivalence between $(\X, \D)$ and $(\X', \D')$ when they both satisfy (\hyperlink{assumption}{$\spadesuit$}) (even when $s = 0$, $1/r$). 
    
    Indeed, replace $B_s(y)$ in the proof of Theorem~\ref{thrgI} by  
    \[B_s^{P, a}(y) = \int_{\O^N} P\bigl(v\bigl(1 + {\textstyle\sum} \lambda_i y_i\bigr) + a\bigr)\cdot \bigl|1 + {\textstyle\sum} \lambda_i y_i\bigr|^s\,d\lambda. \]
    Then the function
    \[B_{s, 0}^{P, a}(y) \colonequals (-1)^{d + 1} \sum_{j = 0}^{d + 1} \tbinom{d+1}{j} (-t)^{-j} B_s^{P, a}(\pi^j y)\]
    is supported in $\pi^{-d}\O^N$, and one can prove that the integral
    \[Q^P(a) \colonequals \int_{\pi^{-d} \O^N} B_{s, 0}^{P, a}(y)\,dy\]
    is a polynomial in $a$ with $\deg Q^P = d - \delta_{t1}$. Since $Q^P\neq 0$, we can then determine the Jacobian function $J_\phi$ following the proof above. 
\end{rmk}

Applying the same method in Theorem~\ref{thrgI}, we can prove that: 
\begin{pp}
    Let $(\X, \D)$ be a smooth projective klt pair over $\O$, $L$ be an $\R$-divisor on $X$, $V$ be a linear subspace of $V_{r, (X, D), L}$ such that $\Phi_{V}$ maps $X$ birationally to its image. Then the $\Q(t_D)$-valued total norm $\|{-}\|_\D = (\|{-}\|_{s, \D})_{s}$ on $V$ can be determined by any two norms $\|{-}\|_{s_1, \D}$ and $\|{-}\|_{s_2, \D}$ with $s_1\neq s_2$. 
\end{pp}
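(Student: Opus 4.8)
\emph{Proof sketch.} The plan is to recover the geometric data underlying $\|{-}\|_{s, D}$ — the canonical measure $d\mu_{\X, D}$ and the local density attached to $\alpha_0$ — from two specializations, rather than trying to interpolate the rational function $\|\alpha\|_{s, D}$ (whose pole order in $t_D$ varies with $\alpha$) through two of its values. Fix a basis $\alpha_0, \ldots, \alpha_N$ of $V$, set $g_i = \alpha_i/\alpha_0$ and $G = (g_1, \ldots, g_N)\colon \Xg \setminus \{\alpha_0 = 0\} \to \A^{\!N}$, and for each $s$ let $d\nu_s = |\alpha_0|^{s}\, d\mu_{\X, D}^{1 - rs}$, a finite measure on $\Xg(\textbf{K})$. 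Repeating verbatim the Claim in the proof of Theorem~\ref{thrbir} — write $\|\alpha_0 + \sum_i \lambda_i \alpha_i\|_{s, D} = \int |1 + \sum_i \lambda_i g_i|^{s}\, d\nu_s$, average over $\lambda \in \O^N$ by Fubini to get $\int (B(s, -)\circ G)\, d\nu_s$, pass to the cut-off function $B_0(s, -)$, and invoke the identity (\ref{eqcharON}) — one sees that the single norm $\|{-}\|_{s, D}|_V$ already determines $\int_{\Xg(\textbf{K})} (h\circ G)\, d\nu_s$ for every nonnegative Borel function $h\colon \textbf{K}^N \to \R$; equivalently, it determines the pushforward measure $G_*\nu_s$ on $\textbf{K}^N$. (One needs $s > 0$ here, since at $s = 0$ the cut-off $B_0$ vanishes; this is harmless, as $\|{-}\|_{0, D}$ is constant anyway.)

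Next I bring in the hypothesis that $\Phi_V$ maps $\Xg$ birationally onto its image. Choose a Zariski open dense $U \subseteq \Xg \setminus \{\alpha_0 = 0\}$ disjoint from $\operatorname{Supp} D$, from the indeterminacy locus of $\Phi_V$, and from the locus where $\Phi_V$ fails to be injective; then $G$ restricts to a Borel isomorphism $U(\textbf{K}) \xrightarrow{\ \sim\ } G(U(\textbf{K})) \subseteq \textbf{K}^N$, and since $\Xg(\textbf{K}) \setminus U(\textbf{K})$ has measure zero for every $\nu_s$, knowledge of $G_*\nu_s$ recovers $\nu_s|_{U(\textbf{K})}$ (pull back along $G^{-1}$). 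On $U(\textbf{K})$ we may write $d\nu_s = \rho_0^{\,s}\, d\mu_{\X, D}$, where $\rho_0 = |\alpha_0| / d\mu_{\X, D}^{\,r}$ is a positive continuous function there — this is exactly why $U$ is taken away from $\{\alpha_0 = 0\}$ and from $\operatorname{Supp} D$. Applying the previous paragraph at two distinct positive values $s_1 \neq s_2$ (both in the range where the norms are defined) and forming the Radon–Nikodym derivative $d\nu_{s_1}/d\nu_{s_2} = \rho_0^{\,s_1 - s_2}$, we recover $\rho_0$ on $U(\textbf{K})$, and then $d\mu_{\X, D}|_{U(\textbf{K})} = \rho_0^{-s_1}\, d\nu_{s_1}$; both are thus determined by $\|{-}\|_{s_1, D}$ and $\|{-}\|_{s_2, D}$.

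Finally, for an arbitrary $\alpha = \alpha_0 + \sum_i \lambda_i \alpha_i \in V$ one has on $U(\textbf{K})$ the pointwise identity $|\alpha|/d\mu_{\X, D}^{\,r} = \rho_0 \cdot |1 + \sum_i \lambda_i g_i|$, and under the identification $U(\textbf{K}) \cong G(U(\textbf{K}))$ the $g_i$ become the coordinate functions $y_i$ on $\textbf{K}^N$; hence
\[
  \|\alpha\|_{s, D} = \int_{U(\textbf{K})} \bigl(\rho_0 \cdot |1 + {\textstyle\sum_i} \lambda_i y_i|\bigr)^{s}\, d\mu_{\X, D},
\]
whose right-hand side is built entirely from objects determined by $\|{-}\|_{s_1, D}$ and $\|{-}\|_{s_2, D}$. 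Since $s \mapsto \|\alpha\|_{s, D}$ is, as already shown, a rational function of $t_D = q^{-s/r_D}$, this formula pins down that rational function, i.e. the $\Q(t_D)$-valued total norm $\|\alpha\|_D$; the forms $\alpha$ with vanishing $\alpha_0$-coordinate are dealt with by rerunning the argument after permuting the roles of the basis vectors, which covers $V$ by finitely many complements of hyperplanes. I expect the main obstacle to be the second paragraph: one must check carefully that birationality of $\Phi_V$ really upgrades knowledge of $G_*\nu_s$ to knowledge of $\nu_s$ on $U(\textbf{K})$ — that $G$ is injective on $\textbf{K}$-points off a measure-zero set with measurable inverse — and that $\rho_0$ and $d\mu_{\X, D}$ stay positive and finite there after $U$ is shrunk away from $D$ and the exceptional loci; the rest is a transcription of the proofs of Theorems~\ref{thrbir} and~\ref{thrgI}.
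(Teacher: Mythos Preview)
Your proposal is correct and follows essentially the same route as the paper: recover the pushforward measures $G_*\nu_{s_i}$ from $\|{-}\|_{s_i,D}$ via the cut-off argument of Theorem~\ref{thrbir}, take a ratio to isolate $|\alpha_0|/d\mu_{\X,D}^{\,r}$, and feed this back into the integral to get $\|\alpha\|_{s,D}$ for all $s$. The only cosmetic differences are that the paper stays on $\textbf{K}^N$ (computing $|\alpha|/d\mu_{\X,D}^{\,r}$ as a limit of ratios $I_2/I_1$ over shrinking balls, for each $\alpha$ separately) rather than pulling back to $U(\textbf{K})$ and invoking Radon--Nikodym once for $\alpha_0$, but these are the same operation.
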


\begin{proof}
    Suppose that the data $\|{-}\|_{s_1, D}$ and $\|{-}\|_{s_2, D}$ with $s_1 \neq s_2$ are given. Then for each $\alpha \in V$ we can determine 
    \[I_i(h) = \int_{U(\K)}(h\circ G)\cdot|\alpha|^{s_i}\,d\mu_{\X, \D}^{1-rs_i} \]
    for all nonnegative Borel function $h\colon \K^N \to \R$, where $U$ is some Zariski dense open dense subset of $X$ and $G\colon U \to \textbf{A}^{\!N}$ is the immersion defined by $\Phi_{V}$ (up to a choice of basis). 
    
    Thus, we can determine  
    \[\frac{|\alpha|}{d\mu_{\X, \D}^r}(x) = \lim_{\varepsilon \to 0^+}\left(\frac{I_2\bigl(B_\varepsilon(G(x))\bigr)}{I_1\bigl(B_\varepsilon(G(x))\bigr)}\right)^{\frac{1}{s_2 - s_1}}\]
    for each $x\in U(\K)$. Let 
    \[Z = \left\{u \in \K^N \,\middle|\, I_1\bigl(B_\varepsilon(u)) \neq 0,\ \forall \varepsilon > 0\right\}. \]
    It is clear that $Z$ contains $G(U)(\K)$ and is contained in the closure $\overline{G(U)(\K)}$ of $G(U)(\K)$. 
    
    Take 
    \[h(u) = \boldsymbol{1}_Z(u)\cdot\lim_{\varepsilon \to 0^+}\left(\frac{I_2\bigl(B_\varepsilon(u)\bigr)}{I_1\bigl(B_\varepsilon(u)\bigr)}\right)^{\frac{s - s_1}{s_2 - s_1}}, \]
    so $h(G(x)) = (|\alpha|/d\mu_{\X, \D}^r)^{s - s_1}(x)$ for each $x\in U(\K)$. We see that
    \[\|\alpha\|_{s, \D} = \int_{U(\K)}  \left(\frac{|\alpha|}{d\mu_{\X, \D}^r}\right)^{s - s_1}\cdot |\alpha|^{s_1}\,d\mu_{\X, \D}^{1 - rs_1} = I_1(h) \]
    is determined by the norms $\|{-}\|_{s_1, \D}$ and $\|{-}\|_{s_2, \D}$. 
\end{proof}

\newpage

\bibliographystyle{amsalpha}
\bibliography{bib}

\end{document}